\newtheorem{proposition}{Proposition}%
\newtheorem{example}{Example}%
\newtheorem{remark}{Remark}%
\newcommand{\N}{\mathbb{N}}
\newcommand{\R}{\mathbb{R}}
\newcommand{\Rplus}{\mathbb{R}_+}
\let\RE\Re
\let\Re=\undefined
\DeclareMathOperator{\Re}{\RE e}
\let\IM\Im
\let\Im=\undefined
\DeclareMathOperator{\Im}{\IM m}
\DeclareMathOperator*{\argmin}{argmin}
\newcommand{\abs}[1]{\left|#1\right|}
\newcommand{\norm}[1]{\left\|#1\right\|}
\newcommand{\set}[1]{\left\{#1\right\}}
\newcommand{\Set}[2]{\left\{#1 ~ : ~ #2\right\}}
\newcommand{\inner}[2]{\left<#1,#2\right>}
\renewcommand{\d}{\,\mathrm d}
\newcommand{\PP}{{\mathbb P}}
\newcommand{\dom}{D}
\newcommand{\ulow}{u^\mathrm{low}}
\newcommand{\Ulow}{{\bm u}^\mathrm{low}}
\newcommand{\uupp}{u^\mathrm{upp}}
\newcommand{\Uupp}{{\bm u}^\mathrm{upp}}
\newcommand{\normlap}{\tilde{\Delta}}
\newcommand{\Normlap}{\tilde{\bm \Delta}}
\newcommand{\normnabla}{\tilde{\nabla}}
\newcommand{\Normnabla}{\tilde{\bm \nabla}}
\newcommand{\normpartial}{\tilde{\partial}}
\renewcommand{\u}{{\bm u}}
\newcommand{\x}{{\bm x}}
\newcommand{\y}{{\bm y}}
\renewcommand{\v}{{\bm v}}
\newcommand{\w}{{\bm w}}
\newcommand{\f}{{\bm f}}
\newcommand{\F}{{\bm F}}
\newcommand{\Y}{{\bm Y}}
\newcommand{\U}{{\bm U}}
\newcommand{\TV}{\mathbf{TV}}
\newcommand{\normtv}{\tilde{\mathrm{TV}}}
\newcommand{\Normtv}{\tilde{\mathbf{TV}}}
\newcommand{\Blanket}{\bar{\bm u}}
\newcommand{\blanket}{\bar u}
\newcommand{\Flow}{{\bm f}^\mathrm{low}}
\newcommand{\Fupp}{{\bm f}^\mathrm{upp}}
\newcommand{\Idmat}{{\bm I}}
\newcommand{\ddiff}{{\bm A}}
\newcommand{\ddiffflat}{\tilde{\bm A}}
\newcommand{\UU}{\mathbb{U}}
\newcommand{\VV}{\mathbb{V}}
\newcommand{\given}{{~ | ~}}
\newcommand{\flatten}{\mathrm{flat}} 
\newcommand{\fwd}{\mathcal{G}}
\newcommand{\Fwd}{\bm G}
\newcommand{\indicator}{\mathbbm{1}}
\newcommand{\normal}{\mathcal N}
\newcommand{\Ulows}{\bm u^{\mathrm{low}, s}}
\newcommand{\Uupps}{\bm u^{\mathrm{upp}, s}}
\newcommand{\zlow}{\bm z^\mathrm{low}}
\newcommand{\zupp}{\bm z^\mathrm{upp}}
\begin{document}

\title[Uncertainty Quantification for Scale-Space Blob Detection]{Uncertainty Quantification for Scale-Space Blob Detection}

\author*[1]{\fnm{Fabian} \sur{Parzer}}\email{fabian.kai.parzer@univie.ac.at}

\author[1]{\fnm{Clemens} \sur{Kirisits}}
\email{clemens.kirisits@univie.ac.at}

\author[1,2,3]{\fnm{Otmar} \sur{Scherzer}}\email{otmar.scherzer@univie.ac.at}

\affil[1]{\orgdiv{Faculty of Mathematics}, \orgname{University of Vienna}, \orgaddress{\street{Oskar-Morgenstern-Platz 1}, \city{Vienna}, \postcode{1090}, \country{Austria}}}

\affil[2]{\orgname{Johann Radon Institute for Computational and Applied Mathematics (RICAM)}, \orgaddress{\street{Altenbergerstrasse 69}, \city{Linz}, \postcode{4040}, \country{Austria}}}

\affil[3]{\orgname{Christian Doppler Laboratory for Mathematical Modeling and Simulation of Next Generations of Ultrasound Devices (MaMSi)}, \orgaddress{\street{Oskar-Morgenstern-Platz 1}, \city{Vienna}, \postcode{1090}, \country{Austria}}}

\abstract{We consider the problem of blob detection for uncertain images, such as images that have to be inferred from noisy measurements. Extending recent work motivated by astronomical applications, we propose an approach that represents the uncertainty in the position and size of a blob by a region in a three-dimensional scale space. Motivated by classic tube methods such as the taut-string algorithm, these regions are obtained from level sets of the minimizer of a total variation functional within a high-dimensional tube. The resulting non-smooth optimization problem is challenging to solve, and we compare various numerical approaches for its solution and relate them to the literature on constrained total variation denoising. Finally, the proposed methodology is illustrated on numerical experiments for deconvolution and models related to astrophysics, where it is demonstrated that it allows to represent the uncertainty in the detected blobs in a precise and physically interpretable way.}

\keywords{uncertainty quantification, blob detection, scale space, total variation regularization}

\maketitle

\section{Introduction}\label{sec:intro}

Blob detection is a classic task in computer vision. Here, we mean by a \emph{blob} a round structure with a roughly Gaussian intensity profile. In order to simultaneously estimate the position and size of such blobs, detection methods often rely on the scale-space representation of an image, which represents the image at different levels of smoothing, allowing to distinguish low-scale from high-scale structures. This approach is commonly refered to as scale-space blob detection. The most well-known example of this is the Laplacian-of-Gaussians (LoG) method \cite{Lin98}, which is based on the premise that, for the Gaussian scale-space representation, the local extrema of the normalized Laplacian are good indicators for the position and size of a blob.

In many applications -- in particular in astronomy -- the image of interest is not known a-priori but has to be reconstructed from noisy measurements. This means that the image comes with significant uncertainties, and it is important to take these uncertainties into account when performing blob detection.

A particular example from astronomy is integrated-light stellar population recovery \cite{BoeAlfNeuMarLea20}, where the problem is to detect stellar populations as blobs in a two-dimensional mass density that is reconstructed from an observed spectrum. For this problem, the present authors with co-authors have developed an uncertainty-aware version of the Laplacian-of-Gaussians blob detection method, ULoG \cite{ParJetBoeAlfSch23a}. ULoG was formulated as a tube method that detects significant blobs by computing a tube-shaped confidence region for the uncertain scale-space representation, and then solving a minimization problem designed to obtain a representative that exhibits the "least amount of blobs".

While the results of the ULoG method were satisfying for this particular application, it only yielded a very rudimentary representation of the uncertainty with respect to the position and size of a blob.

In this paper, we propose an improved method that aims to resolve this issue. The basic idea is to represent the uncertainty in a blob by a region in scale space which represents the possible variation in position and size.

To obtain these regions, we formulate an optimization problem that enforces solutions with piecewise constant normalized Laplacian, from which the desired blob regions can be extracted as level sets. The formulation of the optimization problem uses ideas from total variation (TV) regularization, which is why we refer to the novel method as TV-ULoG.

\subsection{Contributions}

\begin{itemize}
\item We introduce the TV-ULoG method for blob detection with uncertainties. The proposed approach is flexible and can be adapted to a wide range of applications, in particular Bayesian imaging. We also discuss connections to the taut-string algorithm and constrained total-variation denoising.
\item We extensively discuss the numerical treatment of the resulting non-smooth, bound-constrained convex optimization problem. We compare approaches based on smoothing the dual or the primal problem, and an interior-point approach based on reformulating the optimization problem as SOCP.
\item Finally, we illustrate the TV-ULoG method on numerical test cases for astronomical imaging and deconvolution.
\end{itemize}

\subsection{Organization of the paper}

The paper is organized as follows:
\begin{itemize}
\item In the remainder of this section, we review related work (\autoref{subsec:related_work}) and introduce notation that is used throughout the paper (\autoref{subsec:notation}).
\item In \autoref{sec:preliminaries}, we recall the necessary prerequisites on scale-space blob detection. We focus on the Gaussian scale-space representation and the Laplacian-of-Gaussians blob detection method, which are fundamental for the rest of the paper.
\item In \autoref{sec:tube_methods}, we describe our tube-based approach for uncertainty-aware blob detection. After discussing scale-space aspects of uncertainty quantification and the ULoG method from our previous work, we derive the novel TV-ULoG method.
\item In \autoref{sec:numerical_treatment}, we discuss in detail the numerical implementation of TV-ULoG. The majority of the section is devoted to the solution of the resulting optimization problem.
\item In \autoref{sec:numerics}, we demonstrate the method on two numerical test cases. We also use these test cases to evaluate the performance of the proposed optimization methods.
\item The paper ends with a conclusion in \autoref{sec:conclusion}.
\end{itemize}

\subsection{Related work}\label{subsec:related_work}

We have based our work on the Laplacian-of-Gaussians method for scale-space blob detection \cite{Lin98} since it is well-known and its mathematical formulation is simple, making it easier to extend it to the case of uncertain images. Alternative methods for blob detection are discussed e.g. in the reviews \cite{MikTuySchZisMat05, Lin13, LiWanTiaDin15}. Some general references on scale-space methods for image processing and computer vision are \cite{Lin94, SpoNieFloJoh97, Lin94a, Wei98}.

Our work can be seen as an instance of a statistical scale space method \cite{HolPas17}, but this is not a focus of this paper. In particular the works \cite{GodMarCha04, GodOig05, ThoRueSkrGod12, PasLauHol13} are similar since they also study uncertain signals in scale space. However, our approach differs through its formulation as a tube method and the specific focus on blob detection.

Another related line of work formulates significance tests for image structures as convex optimization problems \cite{RepPerWia19, PriMcECaiKitWal21, PriMcEPraKit20}. This methodology relies on concentration inequalities \cite{Per17} and is computationally very efficient, but does not automatically detect the position and scale of structures since the presence of a structure must be formulated as user-specified hypothesis.

To our knowledge, before \cite{ParJetBoeAlfSch23a}, the specific problem of uncertainty-aware blob detection has not been addressed previously.

\subsection{Notation}\label{subsec:notation}

\begin{itemize}
\item For $n \in \N$, we define the discrete range $[n] := \set{1,\ldots, n}$.
\item $\Rplus := [0, \infty)$ denotes the non-negative real numbers
\item For a vector $\x \in \R^n$, its Euclidean norm is denoted by $\norm{\x} := \sqrt{\sum_{i=1}^n x_i^2}$.
\item The convolution of two functions is denoted by $f * g(\x) := \int f(\y) g(\x - \y) \d \y$.
\item The spatial Laplace operator is denoted by $\Delta := \partial_{x_1}^2 + \partial_{x_2}^2$.
\item The probability distribution of a random element $U$ is denoted by $\PP_U$ \cite{Kal02}. Its corresponding density function is denoted by $p_U$, if it exists. Given another random element $V$, the conditional distribution of $U$, given $V=v$, is denoted by $\PP_{U | V}(\cdot | v)$, with corresponding conditional density $p_{U| V}(\cdot | v)$ (see also \cite{Sch95}).
\item Given functions $\ulow,\uupp: D \to \R$ on a domain $D \subset \R^n$, with $\ulow \leq \uupp$, we call the set of functions
\begin{equation}
\begin{aligned}
[u^\mathrm{low},u^\mathrm{upp}] := \lbrace & u: D \to \R ~ : \\
 & u^\mathrm{low}(\x) \leq u(\x) \leq u^\mathrm{upp}(\x) ~ \forall \x \in D \rbrace
\end{aligned} \label{eq:tube}
\end{equation}
a \emph{tube}. Similarly, given two vectors $\u^\mathrm{low},\u^\mathrm{upp} \in \R^n$, we call the set of vectors
\begin{equation}
\begin{aligned}
[\u^\mathrm{low},\u^\text{upp}] := \lbrace \u \in \R^n ~: ~\ulow_i \leq u_i \leq \uupp_i  \\
\forall i \in [n] \rbrace
\end{aligned}\label{eq:discrete_tube}
\end{equation}
a \emph{discrete tube}. This definition is straightforward to extend to higher-dimensional objects, such as discrete images.
\item We denote the characteristic function of a set $C$ by
\begin{align*}
\chi_C(\v) := \begin{cases} 0, & \text{if } \v \in C, \\
\infty, & \text{otherwise}. \end{cases}
\end{align*}
\item We let $\bm{0}_n \in \R^n$ denote the zero-vector in $\R^n$ and $\bm{1}_n \in \R^n$ denote the vector with all entries equal to 1. Similarly, $\bm{0}_{m \times n} \in \R^{m \times n}$ denotes the zero matrix and $\bm{1}_{m \times n} \in \R^{m \times n}$ denotes the matrix with all entries equal to 1. Also, $\bm e_i^n \in \R^n$ denotes the $i$-th basis vector in $\R^n$, with entries $(\bm e_i^n)_j= \delta_{ij}$.
\item Given a nonempty closed convex set $C \subset \R^d$, $P_C(x) := \argmin_{\y \in C} \norm{\y - \x}$ denotes the projection on $C$.
\item For a set $S$, $2^S$ denotes its power set.
\end{itemize}

\section{Scale-space blob detection}\label{sec:preliminaries}\label{sec:scale_space}

In this section, we review the scale-space approach to blob detection that underlies the rest of this paper. We focus on the Gaussian scale-space representation, which we introduce in \autoref{subsec:scale_space_representation}. Then, we review the classic Laplacian-of-Gaussians method for blob detection in \autoref{subsec:blob_detection}.

\subsection{Scale-space representations}\label{subsec:scale_space_representation}

In the computer vision literature, the mathematical theory of describing images at different scales is known as \emph{scale space theory} \cite{Lin94}. A scale space representation of an image $f: \R^2 \to \R$ is a function $u: \R^2 \times \Rplus \to \R$ which depends on an additional third parameter $t$ that represents physical scale. The \emph{Gaussian scale-space representation} is the most studied example, due to its simple formulation and the fact that it is the unique linear scale space representation that satisfies a series of intuitive axioms that formalize the notion of scale \cite{AlvGuiLioMor93, FloTerKoeVie92}.

The Gaussian scale-space representation of an image $f: \dom \to \R$ on a domain $D \subset \R^2$ is defined as the solution $u: D \times [0, \infty) \to \R$ of the diffusion equation
\begin{equation}
\begin{aligned}
\partial_t u(\x, t) &= \frac{1}{2} \Delta u(\x, t), \quad &(\x, t) \in D \times (0, \infty),\\
u(\x, 0) &= f(\x), \quad &\x \in D,\\
\partial_{\bm \nu} u(\x, t) &= 0, \quad &(\x, t) \in \partial D \times (0, \infty).
\end{aligned}\label{eq:continuous_scale_space}
\end{equation}
Here, we imposed Neumann boundary conditions, following \cite{DuiFelFloPla03}. In the following, we will denote with $\Phi$ the solution operator of \eqref{eq:continuous_scale_space} that maps an image $f$ to its scale-space representation $u$. One can show that $\Phi$ is well-defined under suitable assumptions (see e.g. \cite{Bre11}).

For the rest of this paper, we will not consider other scale-space representations. In particular, we will always mean the representation $u$ defined in \eqref{eq:continuous_scale_space} when we write "the scale-space representation" of an image $f$.

\subsection{Blob detection}\label{subsec:blob_detection}

The scale-space representation of an image is often used to detect the position and size of features of interest. A particular example of this is the well-known Laplacian-of-Gaussians method for blob detection \cite{Lin98}. It is a special case of the differential-invariants approach for feature detection which we introduce next.

\subsubsection{Feature detection from differential invariants}\label{subsubsec:scale_invariance}

Image features often correspond well to local extrema of \emph{scale-normalized derivatives} of the image's scale-space representation $u$, i.e. combinations of the scaled partial derivatives
\begin{align}
\normpartial_{x_i} u(\x, t) := \sqrt{t} \partial_{x_i} u(\x, t), \quad i=1,2 \label{eq:scale_normalized_derivative}
\end{align}
(cf. \cite{Lin13}). The scale-normalization is necessary to achieve an intuitive scale-invariance property: A feature of scale $t$ in the image $f$ should correspond to a feature of scale $s \cdot t$ in the rescaled image $f_s(\x) := f(\x / \sqrt{s})$. E.g. zooming in by a factor 2 increases the scale of all features by a factor 4.

\subsubsection{The Laplacians-of-Gaussians method}

The Laplacian-of-Gaussians method is a special case of the methodology described in \autoref{subsubsec:scale_invariance} for blob detection. It uses the local minimizers of the scale-normalized Laplacian of $u$ as indicators for blobs in an image, where the scale-normalized Laplacian is given by
\begin{align}
\normlap u(\x, t) := (\normpartial_{x_1}^2 + \normpartial_{x_2}^2) u(\x, t) = t \Delta u(\x, t), \label{eq:normalized_laplacian}
\end{align}
That is, a local minimizer or maximizer of $\normlap u$ at $(\x, t) \in \dom \times \Rplus$ indicates the presence of a blob-like shape with center $\x$ and scale $t$.

\begin{example}\label{ex:prototypical_blob}
To understand what is meant by "blob-like shape", let us define the prototypical blob with center $\bm m \in \R^2$ and size $s$ as the symmetrical Gaussian
\begin{align*}
f(\x) := \frac{1}{2 \pi s} \exp \left( - \frac{\norm{\x - \bm m}^2}{2 s} \right), \quad \x \in \R^2.
\end{align*}
Its scale-space representation is
\begin{align*}
u(\x, t) = \frac{1}{2 \pi (s + t)} \exp \left( - \frac{\norm{\x - \bm m}^2}{2 (s + t)} \right),
\end{align*}
and the scale-normalized Laplacian is
\begin{align*}
\normlap u(\x, t) = t \frac{\norm{\x - \bm m}^2 - 2(s + t)}{2 (s + t)^2} u(\x, t),
\end{align*}
which has a unique local minimum at $(\bm m, s)$. This means that the position and size of the prototypical Gaussian blob $f$ are exactly recovered from the local minimizer of $\normlap u$. Note that the normalization in \eqref{eq:normalized_laplacian} is important for detecting the scale, since the un-normalized Laplacian $\Delta u$ has a unique local minimum at $(\bm m, 0)$, which does not indicate the blob size. The relation of the Laplace operator to blob detection is also discussed in \cite{MarHil80, VooPog88}.
\end{example}

The prototypical Gaussian blob also motivates the common visualization of the results of the Laplacian-of-Gaussians method, where a detected blob $(\bm x, t)$ is  visualized by a circle with center $\x$ and radius proportional to $\sqrt{t}$. Usually, the radius $\sqrt{t}$ is used for one-dimensional signals and the radius $\sqrt{2t}$ is used for two-dimensional signals (images).

\section{Tube-based uncertainty quantification for blob detection}\label{sec:tube_methods}

The Laplacian-of-Gaussians method described in \autoref{subsec:blob_detection} detects blobs from local minimizers of $\normlap u$, where $u$ is the Gaussian scale-space representation of $f$ given by \eqref{eq:continuous_scale_space}. The purpose of this paper is to extend this methodology to the case where the image $f$ is uncertain, for example when it has to be estimated from noisy measurements.

\subsection{Incorporating uncertainty into scale-space methods}\label{subsec:scale_space_uq}

Consider the problem of recovering an image of interest $f^*: D \to \R$ (called the \emph{ground truth}) from a noisy measurement $y$ given by
\begin{align}
y = \fwd(f^*) + w, \label{eq:imaging_inverse_problem}
\end{align}
where $\fwd$ is an operator that represents the measurement process, and $w$ is noise. The presence of the noise implies that any estimate of the image $f^*$ comes with uncertainties. In the Bayesian approach \cite{GemGem86, Han93} to imaging, these uncertainties are taken into account by modelling $f^*$, $y$ and $w$ as realizations of random quantities $F$, $Y$ and $W$ that are related by
\begin{align}
Y = \fwd(F) + W. \label{eq:forward_equation}
\end{align}
The assumed distribution $\PP_F$ of the random image $F$ is called the \emph{prior}, since it encodes a-priori assumptions on the unknown image. Using \eqref{eq:forward_equation} and statistical assumptions on $W$, one can define a likelihood in the form of a conditional probability distribution $\PP_{Y|F}$. Recall that, for given $f$, $\PP_{Y|F}(\cdot|f)$ is a probability measure that represents the distribution of $Y$ given $F=f$. For the construction of conditional probability distributions on abstract spaces (e.g. infinite-dimensional function spaces), see for example \cite[chapter 6]{Kal02} or \cite[chapter 8.3]{Kle14}.

Together, the prior and the likelihood determine the so-called \emph{posterior} distribution $\PP_{F | Y}$, which quantifies the uncertainty with respect to $F$, conditional on observing $Y$, through Bayes' rule (see e.g. \cite{KaiSom05, DasStu17} for further reference).

Treating the image of interest as random means that its scale-space representation also needs to be modelled as random object. Let $\Phi$ denote the solution operator of the diffusion equation \eqref{eq:continuous_scale_space} that maps an image $f$ to its corresponding scale-space representation $u=\Phi f$. Then the scale-space representation of the random image $F$ is the random function $U = \Phi F$. The posterior distribution $\PP_{U|Y}$ of $U$ is then determined by the posterior distribution $\PP_{F|Y}$ of $F$ through the usual transformation rules: Given an observation $Y=y$, the posterior probability that $U$ lies in a set of functions $A$ is given by \\
\begin{align}
\PP_{U | Y}(A \given y) & = \PP_{\Phi F | Y}(A \given y) \notag\\
& = \PP_{F | Y}(\Phi^{-1}(A) \given y).\label{eq:pushforward_continuous}
\end{align}
The problem of uncertainty-aware blob detection can then be rephrased as finding local minimizers of $\normlap U$ for uncertain $U$. Developing a practical method to solve this problem requires a suitable representation of the uncertainty encoded in the abstract posterior distribution $\PP_{U | Y}$. In this paper, we assume that the uncertainty with respect to $U$ is represented by a \emph{credible scale-space tube}. That is, we assume knowledge of functions $\ulow, \uupp: D \times \Rplus \to \R$ such that
\begin{equation}
\PP_{U | Y}([\ulow, \uupp] \given y) \geq 1 - \alpha, \label{eq:credibility_condition}
\end{equation}
for a small credibility parameter $\alpha \in (0, 1)$. We restrict ourselves to tube-shaped regions for three main reasons: First, such tubes can be obtained in many applications (see \autoref{rem:compute_tubes}). Second, this choice leads to a relatively simple formulation of our method as bound-constrained convex optimization problem (see \autoref{subsubsec:tv_ulog_formulation} below). Finally, it is motivated by existing tube methods from density estimation, which we quickly review in \autoref{subsec:tube_methods} below.

\begin{remark}\label{rem:compute_tubes}
Since $U$ depends linearly on $F$, it is often straightforward to compute tubes that satisfy \eqref{eq:credibility_condition} using sampling-based inference (e.g. Markov chain Monte Carlo (MCMC) \cite{BroGelJonMen11}) or approaches based on analytic approximations (e.g. variational inference \cite{BleKucMcA17}). We illustrate this in \autoref{app:tubes_from_mcmc}, where we present a method for estimating the scale-space tube using MCMC samples. This method is used in the numerical examples of \autoref{sec:numerics}.
\end{remark}

\subsubsection*{The scale-uncertainty tradeoff}

The structure of the tube $[\ulow, \uupp]$ will in general represent a tradeoff between scale and uncertainty in the image $F$: At lower scales, the uncertainty is higher since small-scale features are more difficult to detect from noisy observations. At higher scales, the uncertainty decreases as this local variability is filtered out and the ability to resolve finer details is lost. This phenomenon is a special case of the fundamental bias-variance tradeoff that applies to uncertain signals in general \cite{Was06}. In the language of image processing, it corresponds to the simple intuition that coarse structures are easier to identify than fine details, given limited data. In particular, the bounds $\ulow$ and $\uupp$ in general are not scale-space representations of corresponding images $f^\mathrm{low}, f^\mathrm{upp}$. This was also demonstrated in the previous work \cite{ParJetBoeAlfSch23a}.

\subsection{Tube methods in density estimation}\label{subsec:tube_methods}

The task of estimating the local extrema of an uncertain signal has previously been studied in density estimation, where it is adressed using tube methods \cite{Mam95,Mamvan97,DavKov04,ObeSchKov07}. For an unknown density $g: \dom \to \R$, one considers a tube $[g^\mathrm{low}, g^\mathrm{upp}]$ that is typically constructed as a corridor of fixed width around a noisy measurement of the density.

The minimizer $\bar g \in [g^\mathrm{low}, g^\mathrm{upp}]$ of a tube-constrained optimization problem
\begin{align*}
\min_g \quad & \mathcal J(g) \\
\text{s. t.} \quad & g^\mathrm{low} \leq g \leq g^\mathrm{upp}
\end{align*}
then serves as representative with the "smallest amount of features" among all signals in the tube, where the "amount of features" is encoded in the choice of the cost function $\mathcal J$. A prominent example is the one-dimensional taut-string algorithm \cite{HarHar85, Dav95}. It uses the choice $\mathcal J(g) = \int \sqrt{1 + \abs{g'}^2}$ which is known to yield an estimate $\bar g$ such that its derivative $\bar g'$ is piecewise-constant and minimizes the number of local extrema amongst all functions with anti-derivative in the given tube \cite{DavKov01}.

\subsection{Review of the ULoG method}\label{subsec:ulog}

Motivated by these classic tube methods for density estimation, we define a suitable cost function $\mathcal J(u)$ that serves as a proxy for the number of local extrema of $\normlap u$, and then obtain a desired representative $\blanket$ as the minimizer of the constrained optimization problem
\begin{equation}
\begin{aligned}
\min_u \quad & \mathcal J(u) \\
\text{s. t.} \quad & \ulow \leq u \leq \uupp \text{ on } \dom \times \Rplus.
\end{aligned}\label{eq:blanket_problem}
\end{equation}
In previous work \cite{ParJetBoeAlfSch23a} we used the cost function
\begin{align}
\mathcal J(u) = \int_{\dom \times \Rplus} (\normlap u)^2. \label{eq:old_cost_function}
\end{align}
The particular difference to our new method is that we minimize a different energy function, which takes into account derivatives with respect to $t$ (see \eqref{eq:tv_ulog} below), while the optimization problem \eqref{eq:blanket_problem} can be decoupled in time, leading to a group of bounded linear least-squares problems that can be solved independently.

In analogy to the Laplacians-of-Gaussians method, we then identified "significant blobs" as the local minimizers of the solution $\blanket$ of \eqref{eq:blanket_problem} (see also \autoref{ex:prototypical_blob}). The resulting method was correspondingly called the "Uncertainty-aware Laplacian-of-Gaussians" (ULoG) method.

The ULoG method showed satisfying results in simulations, where minimizers of \eqref{eq:blanket_problem} with $\mathcal J$ given by \eqref{eq:old_cost_function} typically exhibited a scale-normalized Laplacian $\normlap \blanket$ that attained its local minima at distinct points which corresponded very well to significant blobs in the ground truth image (cf. \cite{ParJetBoeAlfSch23a}).

However, the choice \eqref{eq:old_cost_function} was motivated more by computational simplicity than theoretical considerations. Furthermore, a main limitation of this approach is that it only provides a limited view of the uncertainty of scale-space blobs, since the uncertainty with respect to a blob is represented by a single point $(\x, t) \in \dom \times \Rplus$. The scale parameter $t$ is difficult to interpret, since it is both related to the expected physical size of the blob and to the uncertainty with respect to its center. That is, a large value of $t$ can correspond both to the presence of a large, certain blob in $f$, or a small one with uncertain position.

\subsection{A total variation-based approach}\label{subsec:tv_approach}

A remedy is to instead represent the uncertainty with respect to a blob by a three-dimensional connected \emph{region} in $\dom \times \Rplus$ that contains the uncertain blob with high probability. The geometry of this region then provides a more nuanced view of the possible variation in position and size.

We determine the desired regions using again a tube-based approach. To this end, we consider a cost function that leads to minimizers of \eqref{eq:blanket_problem} with \emph{piecewise-constant} normalized Laplacian. In that case, the local minima of $\normlap \blanket$ are attained at flat connected regions which can easily be extracted using thresholding.

As cost function, we propose to use the total variation of $\normlap u$ \cite{CasChaNov15}. This can be motivated by the fact that in one dimension, minimizing the total variation of $u'$ within the tube $[\ulow, \uupp]$ leads to an estimate with very similar properties as the mode-minimizing taut-string estimate \cite{DavKovMei09}. On the other hand, this approach is consistent with insights from image processing, where total variation regularization is a common tool to achieve piecewise-constant reconstructions \cite{RudOshFat92}. 

\subsubsection{Outline of the method}

We start with a high-level description of the proposed method (see also the schematic overview in \autoref{fig:flowchart}).

\begin{enumerate}
\item We assume that we are given a prior distribution $\PP_F$ for the image of interest and a likelihood $\PP_{Y|F}$ (determined by the forward model \eqref{eq:forward_equation} and statistical assumptions on the noise). For a concrete observation $Y=y$, these determine a posterior distribution $\PP_{F|Y}(\cdot | y)$ through Bayes' rule (see \autoref{subsec:scale_space_uq}), which in turn determines a posterior distribution $\PP_{U|Y}(\cdot|y)$ for the uncertain scale-space representation $U$ (see \eqref{eq:pushforward_continuous}).
\item For a given credibility level $\alpha \in (0,1)$, we compute a credible scale-space tube $[\ulow, \uupp]$ that satisfies \eqref{eq:credibility_condition}, e.g using the MCMC-based method described in \autoref{app:tubes_from_mcmc}.
\item Next, we solve the tube-constrained total variation-based optimization problem formulated in \autoref{subsubsec:tv_ulog_formulation} below. This yields a minimizer $\blanket \in [\ulow, \uupp]$.
\item From the computed minimizer $\blanket$, we extract the regions in $D \times \Rplus$ where $\normlap \blanket$ attains its local minima. In practice this is done using the thresholding procedure described in \autoref{subsec:extraction}.
\item The extracted blob regions can now be visualized using the method outlined in \autoref{subsec:visualization}.
\end{enumerate}%
Since this approach can be seen as a modification of the previous ULoG method (\autoref{subsec:ulog}), we will refer to it as TV-ULoG.

\tikzstyle{default} = [rectangle, rounded corners, 
minimum width=1cm, 
minimum height=1cm,
text centered, 
draw=black, 
fill=red!30]

\tikzstyle{arrow} = [thick,->,>=stealth]

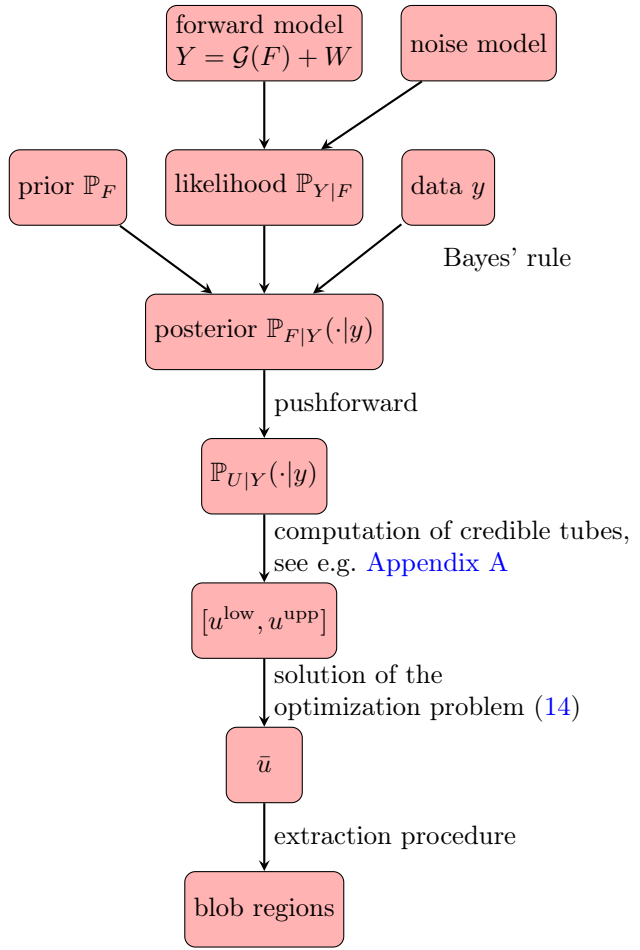
\begin{figure}[h]
\begin{tikzpicture}[node distance = 0.9cm and 0.5cm]

\node (forward) [default, align=left,] {forward model \\ $Y = \fwd(F) + W$};
\node (noise) [default, right=of forward] {noise model};
\node (likelihood) [default, below=of forward] {likelihood $\PP_{Y|F}$};
\node (prior) [default, left=of likelihood] {prior $\PP_F$};
\node (data) [default, align=left, right=of likelihood] {data $y$};
\node (posterior) [default, below=of likelihood] {posterior $\PP_{F|Y}(\cdot|y)$};
\node (uposterior) [default, below=of posterior] {$\PP_{U|Y}(\cdot | y)$};
\node (tubes) [default, below=of uposterior] {$[\ulow, \uupp]$};
\node (tvulogsolution) [default, below=of tubes] {$\bar u$};
\node (regions) [default, below=of tvulogsolution] {blob regions};

\draw [arrow] (forward) -- (likelihood);
\draw [arrow] (noise) -- (likelihood);
\draw [arrow] (prior) -- (posterior);
\draw [arrow] (likelihood) -- (posterior);
\draw [arrow] (data) -- node[anchor=west, xshift=1cm] {Bayes' rule} (posterior);
\draw [arrow] (posterior) -- node[anchor=west] {pushforward}(uposterior);
\draw [arrow] (uposterior) -- node[anchor=west, align=left] {computation of credible tubes, \\ see e.g. \autoref{app:tubes_from_mcmc}}(tubes);
\draw [arrow] (tubes) -- node[anchor=west,align=left] {solution of the \\optimization problem \eqref{eq:tv_ulog}}(tvulogsolution);
\draw [arrow] (tvulogsolution) -- node[anchor=west] {extraction procedure}(regions);
\end{tikzpicture}
\caption{Overview of the proposed approach}\label{fig:flowchart}
\end{figure}

\subsubsection{Formulation of the optimization problem}\label{subsubsec:tv_ulog_formulation}

To prepare the precise mathematical formulation of the resulting optimization problem, we define the scale-normalized total variation of a function $u: \dom \times \Rplus \to \R$ by
\begin{align}
\normtv(u) := \int_{\dom \times \Rplus} \norm{\normnabla_{\x, t} u}, \label{eq:total_variation}
\end{align}
where
\begin{align}
\normnabla_{\x, t} := \begin{bmatrix}
\normpartial_{x_1} \\ \normpartial_{x_2} \\ \normpartial_t
\end{bmatrix} \label{eq:dnorm}
\end{align}
is the scale-normalized gradient operator. Here, $\normpartial_{x_1}$ and $\normpartial_{x_2}$ are the scale-normalized spatial derivatives defined in \eqref{eq:scale_normalized_derivative}, while $\normpartial_t$ is defined as
\begin{align*}
\normpartial_t u(\x, t) := t \partial_t u(\x, t).
\end{align*}
Following the discussion at the start of this section, we suggest to use the choice $\mathcal J(u) = \normtv(\normlap u)$ in \eqref{eq:blanket_problem}. To allow for a finite-difference discretization of the involved differential operators, we have to assume suitable boundary conditions. Our particular choice of Neumann boundary conditions on $u$ and $\normlap u$ was mostly motivated by ease of implementation (see \autoref{rem:boundary_conditions}). In summary, we arrive at the following formulation:
\begin{equation}
\begin{aligned}
\min_{u} \quad & \normtv(\normlap u) \\
\text{s. t.} \quad & \ulow \leq u \leq \uupp \text{ on } \dom \times \Rplus, \\
 & \partial_{\bm \nu} u(\bm x, t) = 0 \text{ on } \partial \dom \times (0, \infty), \\
 & \partial_{\bm \nu} \normlap u(\bm x, t) = 0 \text{ on } \partial \dom \times (0, \infty).
\end{aligned}\label{eq:tv_ulog}
\end{equation}
In \autoref{subsec:implementation_tv}, we discuss the discretization of \eqref{eq:tv_ulog} and consider various optimization algorithms for finding a minimizer of the resulting non-smooth convex optimization problem. Extraction and visualization of the desired regions from the computed minimizer are described in \autoref{subsec:extraction} and \autoref{subsec:visualization}.

\begin{remark}
The scale-normalization in \eqref{eq:dnorm} is necessary to achieve a scale-invariance property analogous to the one described in \autoref{subsubsec:scale_invariance}: If $\bar u_s$ is a minimizer of the scaled problem
\begin{align*}
\min_{u_s} \quad & \normtv(\normlap u_s) \\
\text{s. t.} \quad & u_s^\mathrm{low} \leq u_s \leq u_s^\mathrm{upp} \text{ on } \dom \times \Rplus, \\
 & \partial_{\bm \nu} u_s(\bm x, t) = 0 \text{ on } \partial \dom \times \Rplus, \\
 & \partial_{\bm \nu} \normlap u_s(\bm x, t) = 0 \text{ on } \partial \dom \times \Rplus,
\end{align*}
where
\begin{align*}
u_s^\mathrm{low}(\x, t) = \ulow(\sqrt{s}\x, st), \\
u_s^\mathrm{upp}(\x, t) = \uupp(\sqrt{s}\x, st),
\end{align*}
then $\bar u$ given by
\begin{align*}
\bar u(\x, t) = \bar u_s(\x/\sqrt{s}, t/s) \qquad \text{for all } (\x, t) \in \dom \times \Rplus
\end{align*}
is a minimizer of the original problem \eqref{eq:tv_ulog} and vice versa. See also \cite{Lin98} for more motivation behind such scaling properties.
\end{remark}

\begin{remark}\label{rem:boundary_conditions}
The Neumann boundary condition on $u$ in \eqref{eq:tv_ulog} is motivated by the definition of the scale-space representation \eqref{eq:continuous_scale_space}. The motivation for the second Neumann boundary condition on $\normlap u$ is that it leads to the usual formula for the discrete total variation when combined with a forward-difference approximation (see \autoref{subsec:implementation_tv}).
\end{remark}

\section{Numerical implementation}\label{sec:numerical_treatment}

In this section we discuss how the TV-ULoG method is implemented in practice. In \autoref{subsec:discrete_scale_space}, we discuss the discretization of the optimization problem \eqref{eq:tv_ulog}. Then, we present methods to solve the resulting discrete optimization problem in \autoref{subsec:implementation_tv}. \autoref{subsec:extraction} describes how the desired blob regions can be extracted from the computed minimizer, while \autoref{subsec:visualization} outlines possible visualization of these regions.

\subsection{Discrete scale-space representation}\label{subsec:discrete_scale_space}

To discuss the numerical implementation of the TV-ULoG method, we assume that the image domain $\dom \subset \R^2$ is rectangular and has been discretized into a grid $(\bm x_{ij})_{i \in [N_1], j \in [N_2]}$, with uniform grid size $h_1 > 0$ in $x_1$-direction and uniform grid size $h_2 > 0$ in $x_2$-direction, such that the discrete image of interest is given by a matrix $\f \in \R^{N_1 \times N_2}$.

For the scale discretization, it was suggested in \cite{Lin98} to use discrete scales $0 < t_1 < \ldots < t_K$ that are exponentially increasing, i.e.
\begin{align}
t_{k+1} = b t_k,  \quad k \in [K - 1] \label{eq:exponential_scales}
\end{align}
for some $b > 1$.

The discrete scale-space representation is then defined through a suitable discretization $\bm \Phi: \R^{N_1 \times N_2} \to \R^{N_1 \times N_2 \times K}$ of the solution operator $\Phi$ of the diffusion equation \eqref{eq:continuous_scale_space}. Then, the discrete scale-space representation of $\f$ is given by the three-dimensional array
\begin{align}
\bm u := \bm \Phi \f \in \R^{N_1 \times N_2 \times K}. \label{eq:discrete_scale_space_rep}
\end{align}
In practice, the solution operator $\bm \Phi$ is often implemented as a convolution with a suitable discrete convolution kernel (see e.g. \cite{Lin94a}).

Following the discussion in \autoref{subsec:scale_space_uq}, we then assume access to a credible scale-space tube $[\Ulow, \Uupp]$ for the discrete scale-space representation $\bm U = \bm{\Phi} \F$, such that, analogously to \eqref{eq:credibility_condition}, there holds
\begin{align*}
\PP_{\U | \Y}([\Ulow, \Uupp] \given \y) \geq 1 - \alpha,
\end{align*}
for given $\alpha \in (0, 1)$. How such a tube can be estimated in practice from MCMC samples is described in \autoref{app:tubes_from_mcmc}.

In \autoref{subsec:implementation_tv}, we present how, given $[\Ulow,  \Uupp]$, the non-smooth optimization problem \eqref{eq:tv_ulog} can be solved numerically after discretization. Then, in \autoref{subsec:extraction}, we describe a procedure that extracts the desired blob regions in scale space from the computed minimizer in a way that is robust against numerical errors. In \autoref{subsec:visualization}, we discuss visualizations of these regions that meaningfully represent the uncertainty of the scale-space blobs.

\subsection{Numerical treatment of the optimization problem}\label{subsec:implementation_tv}

In order to solve the optimization problem \eqref{eq:tv_ulog} numerically, we have to define suitable discretizations for the differential operators in the objective.

To discretize the scale-normalized Laplacian \eqref{eq:normalized_laplacian}, we use the common central-difference scheme, i.e. we define
\begin{equation}
\begin{aligned}
(\Normlap \bm u)_{i,j, k} := t_k &  \left( \frac{u_{i+1,j,k} - 2 u_{i,j,k} + u_{i-1,j,k}}{h_1} \right. \\
& \left. + \frac{u_{i,j+1,k} - 2 u_{i,j,k} + u_{i,j-1,k}}{h_2} \right),
\end{aligned}
\label{eq:scale_normalized_laplacian_discrete}
\end{equation}
where we mirror $u$ at the boundaries of the index range, i.e. we set
\begin{equation}
\begin{aligned}
& u_{0, j, k} := u_{2, j, k}, \quad u_{N_1+1,j,k} := u_{N_1 - 1,j,k}, \\
&  u_{i, 0, k} := u_{j,2,k}, \quad u_{i, N_2 + 1, k} := u_{j, N_2 - 1, k}
\end{aligned}\label{eq:mirror_boundary}
\end{equation}
for all $(i,j,k) \in [N_1] \times [N_2] \times [K]$. This choice implements the Neumann boundary condition in \eqref{eq:continuous_scale_space} (see example 5.49 in \cite{BreLor18}).

For the total variation \eqref{eq:total_variation}, we use an isotropic discretization. To this end, we define the forward differences in $x_1$- and $x_2$-direction by
\begin{align*}
& (\Normnabla_{\x, t} \bm u)_{i,j,k} := \begin{bmatrix} (\Normnabla_{x_1} \bm u)_{i,j,k} \\ (\Normnabla_{x_2} \bm u)_{i,j,k} \\(\Normnabla_t \bm u)_{i,j,k} \end{bmatrix} \in \R^3, \\
\text{where } & (\Normnabla_{x_1} \bm u)_{i,j,k} := t_k^{1/2}\frac{u_{i+1,j,k} - u_{i,j,k}}{h_1}, \\
& (\bm \Normnabla_{x_2} \bm u)_{i,j,k} := t_k^{1/2}\frac{u_{i,j+1,k} - u_{i,j,k}}{h_2},
\end{align*}
where we formally define
\begin{align*}
& u_{N_1 + 1,j,k} := u_{N_1,j,k}, \quad u_{i,N_2 + 1,k} := u_{i,N_2,k}, \\
& u_{i,j,K + 1} := u_{i,j,K},
\end{align*}
for all $(i,j,k) \in [N_1] \times [N_2] \times [K]$. Similar to \eqref{eq:mirror_boundary}, this choice implements the Neumann boundary condition on $\normlap u$.

For the non-uniform scale grid \eqref{eq:exponential_scales}, the forward difference approximation of the scale derivative $\partial_t u(\x, t)$ is
\begin{align*}
(\Normnabla_t \bm u)_{i,j,k} & = t_k \frac{u_{i,j,k+1} - u_{i,j,k}}{t_{k+1} - t_k} \\
& = \frac{1}{b-1} (u_{i,j,k+1} - u_{i,j,k}).
\end{align*}
We combine the forward difference approximations in a discrete scale-space gradient $\Normnabla_{\x, t}: \R^{N_1 \times N_2 \times K} \to \R^{N_1 \times N_2 \times K \times 3}$, given by
\begin{equation}
(\Normnabla_{\x, t} \bm u)_{i,j,k} := \begin{bmatrix} (\Normnabla_{x_1} \bm u)_{i,j,k} \\ (\Normnabla_{x_2} \bm u)_{i,j,k} \\(\Normnabla_t \bm u)_{i,j,k} \end{bmatrix} \in \R^3. \label{eq:discrete_forward_difference}
\end{equation}
With definition \eqref{eq:discrete_forward_difference}, we define the isotropic scale-normalized total variation of $\bm u \in \R^{N_1 \times N_2 \times K}$ as
\begin{align}
\Normtv(\bm u) = \sum_{k=1}^K \sum_{i=1}^{N_1} \sum_{j=1}^{N_2} \norm{(\Normnabla_{\x, t} \bm u)_{i,j,k}}. \label{eq:discrete_tv}
\end{align}
In summary, our discretization of \eqref{eq:tv_ulog} reads as
\begin{equation}
\begin{aligned}
\min_{\bm u \in \R^{N_1 \times N_2 \times K}} \quad & \Normtv(\Normlap \bm u) \\
\text{s.t.} \quad &\Ulow \leq \bm u \leq \Uupp.
\end{aligned}\label{eq:tv_ulog_discrete}
\end{equation}
This is a bound-constrained, non-smooth convex optimization problem. For ease of reference, we will also refer to \eqref{eq:tv_ulog_discrete} as the \emph{TV-ULoG optimization problem}.

To our knowledge, this particular problem has not been previously considered in the image processing literature. The closest candidate is the constrained total variation (CTV) denoising problem: Given a noisy image $\bm f^\delta \in \R^{N_1 \times N_2}$ and bounds $\Flow, \Fupp \in \R^{N_1 \times N_2}$ with $\Flow \leq \Fupp$, the discrete CTV-denoising problem is
\begin{equation}
\begin{aligned}
\min_{\bm f \in \R^{N_1 \times N_2}} \quad & \norm{\bm f - \bm f^\delta}^2 + \lambda \TV(\bm f) \\
\text{s. t.} \quad & \Flow \leq \bm f \leq \Fupp.
\end{aligned}\label{eq:ctv_denoising}
\end{equation}
Here, $\lambda > 0$ is a tunable regularization parameter and $\TV(\cdot)$ is the discrete total variation. The problem \eqref{eq:ctv_denoising} has been considered e.g. by Beck and Teboulle in their seminal work \cite{BecTeb09}, where they propose the fast gradient projection (FGP) method for its numerical solution. The FGP method can be summarized as applying Nesterov-accelerated projected gradient descent to the dual of \eqref{eq:ctv_denoising}. In \autoref{sssec:dual_smoothing}, we show that the FGP method can be applied to a Nesterov-smoothed dual of \eqref{eq:tv_ulog_discrete}. In \autoref{sssec:primal_smoothing}, we describe an analogous method that instead applies FGP to the Nesterov-smoothed primal problem.

However, as is illustrated further in \autoref{sec:numerics}, both methods converge very slowly for our particular problem. For this reason, we present a third alternative approach for the solution of \eqref{eq:tv_ulog_discrete} in \autoref{sssec:socp}. It formulates \eqref{eq:tv_ulog_discrete} as an equivalent second-order cone program (SOCP) and solves it with an interior-point method. This method is also motivated by analogous approaches for TV-minimization \cite{GolYin05,YinGolOsh07}.

\subsubsection{Dual smoothing}\label{sssec:dual_smoothing}

To prepare the discussion of the dual-smoothing approach, we recall the notion of Fenchel duality \cite{BotGraWan09}: Let $\UU, \VV$ be finite-dimensional Euclidean spaces, $\bm A: \UU \to \VV$ a linear function, and $\phi: \UU \to [-\infty, \infty]$, $\psi: \VV \to [-\infty, \infty]$ be proper convex functions. Then the Fenchel dual of the optimization problem
\begin{align}
\min_{\u \in \UU} \left\lbrace \phi(\u) + \psi(\bm A \u) \right\rbrace \label{eq:fenchel_primal}
\end{align}
is
\begin{align}
\max_{\v \in \VV} \left\lbrace - \phi^*(-\bm A^* \v) - \psi^*(\v) \right\rbrace, \label{eq:fenchel_dual}
\end{align}
where $\phi^*$ and $\psi^*$ denote the convex conjugates of $\phi$ and $\psi$, respectively.

Let now $\UU = \R^{N_1 \times N_2 \times K}$ and $\VV=\R^{N_1 \times N_2 \times K \times 3}$, such that \eqref{eq:tv_ulog_discrete} is of the form \eqref{eq:fenchel_primal} with
\begin{align}
& \phi(\u) := \chi_{[\Ulow, \Uupp]}(\u),  \qquad \u \in \R^{N_1 \times N_2 \times K},\label{eq:phi} \\
& \psi(\v) := \sum_{i,j,k} \norm{\v_{i,j,k}}, \qquad \v \in \R^{N_1 \times N_2 \times K \times 3}\label{eq:psi}, \\
& \ddiff := \Normnabla_{\x, t} \Normlap: \R^{N_1 \times N_2 \times K} \to \R^{N_1 \times N_2 \times K \times 3}. \label{eq:ddiff}
\end{align}
The next proposition gives an explicit expression for the dual of the TV-ULoG optimization problem \eqref{eq:tv_ulog_discrete}. It is defined with reference to the convex set
\begin{align}
S := \Set{\v \in \R^{N_1 \times N_2 \times K \times 3}}{\norm{\v_{i,j,k}} \leq 1 \text{ for all } i,j,k}. \label{eq:def_s}
\end{align}

\begin{proposition}\label{pro:fenchel_dual}
The Fenchel dual of \eqref{eq:tv_ulog_discrete} is
\begin{equation}
\max_{\v \in S} \min_{\w \in [\Ulow, \Uupp]} \inner{\ddiff^\top \v}{\w}, \label{eq:tv_ulog_dual}
\end{equation}
where $\ddiff$ is as in \eqref{eq:ddiff}.
\end{proposition}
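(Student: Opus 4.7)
The plan is to directly apply the Fenchel duality formula \eqref{eq:fenchel_dual} with the identifications $\phi$, $\psi$, $\ddiff$ already given in \eqref{eq:phi}--\eqref{eq:ddiff}. The primal \eqref{eq:tv_ulog_discrete} fits exactly into the template \eqref{eq:fenchel_primal}, so the work reduces to computing the two convex conjugates and simplifying the resulting expression.

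First I would compute $\psi^*$. Since $\psi(\v) = \sum_{i,j,k} \norm{\v_{i,j,k}}$ is separable across the index $(i,j,k)$ and each block norm is the standard Euclidean norm on $\R^3$, its conjugate is separable as well, and on each block it equals the support function of the Euclidean unit ball, i.e.\ the indicator of the unit ball. Summing over blocks, this gives $\psi^*(\v) = \chi_S(\v)$ with $S$ as in \eqref{eq:def_s}.

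Next I would compute $\phi^*$. Since $\phi = \chi_{[\Ulow, \Uupp]}$ is the indicator of a nonempty closed convex set, its conjugate at any point $\z \in \R^{N_1 \times N_2 \times K}$ is the support function of that set,
\begin{equation*}
\phi^*(\z) = \sup_{\w \in [\Ulow,\Uupp]} \inner{\z}{\w}.
\end{equation*}
Applying this to $\z = -\ddiff^\top \v$ and using the identity $\sup_\w \inner{-\ddiff^\top \v}{\w} = -\inf_\w \inner{\ddiff^\top \v}{\w}$ yields $-\phi^*(-\ddiff^\top \v) = \min_{\w \in [\Ulow,\Uupp]} \inner{\ddiff^\top \v}{\w}$, where the minimum is attained because $[\Ulow,\Uupp]$ is compact.

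Plugging both conjugates into \eqref{eq:fenchel_dual} gives
\begin{equation*}
\max_{\v \in \VV} \Bigl\{ \min_{\w \in [\Ulow,\Uupp]} \inner{\ddiff^\top \v}{\w} - \chi_S(\v) \Bigr\},
\end{equation*}
and absorbing the indicator $\chi_S$ into the feasible set of the outer maximization yields exactly \eqref{eq:tv_ulog_dual}. I do not anticipate a serious obstacle: the only subtlety is justifying that strong duality is not actually needed for the statement (which concerns only the form of the dual), and that the inner infimum is a minimum, which follows from compactness of the box $[\Ulow,\Uupp]$. If one wanted to additionally assert zero duality gap, one could invoke the Fenchel--Rockafellar theorem using the fact that $\phi$ and $\psi$ are proper convex and $\psi$ is continuous everywhere, so the qualification $\operatorname{ri}(\operatorname{dom}\psi) \cap \ddiff(\operatorname{ri}(\operatorname{dom}\phi)) \neq \emptyset$ is trivial, but this is not required for the proposition as stated.
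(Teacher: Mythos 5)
Your proposal is correct and follows essentially the same route as the paper's proof: compute $\psi^*$ blockwise as the indicator of the unit balls (giving $\chi_S$), compute $\phi^*$ as the support function of the box $[\Ulow,\Uupp]$, and substitute both into the Fenchel dual template \eqref{eq:fenchel_dual}. The additional remarks on attainment of the inner minimum and on strong duality are sound but not needed for the statement, exactly as you observe.
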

\begin{proof}
Let $\phi$ and $\psi$ be given by \eqref{eq:phi} and \eqref{eq:psi}, respectively. By example 4.2 in \cite{Bec17}, we have
\begin{align}
\phi^*(\bm v) = \max_{\w \in [\Ulow, \Uupp]} \inner{\w} {\v}. \label{eq:conjugate1}
\end{align}
Furthermore, by theorem 4.12 and example 4.4.12 in \cite{Bec17},
\begin{align}
\psi^*(\v) &= \sum_{i,j,k} \chi_{B_1(0)}(\v_{i,j,k}) \notag\\
& = \chi_S(\v), \label{eq:conjugate2}
\end{align}
where the set $S \subset \R^{N_1 \times N_2 \times K \times 3}$ is given by \eqref{eq:def_s}. The proof follows if we plug \eqref{eq:conjugate1} and \eqref{eq:conjugate2} into \eqref{eq:fenchel_dual}.
\end{proof}

The dual problem \eqref{eq:tv_ulog_dual} could be solved with projected subgradient methods \cite{Bec17}. However, a more efficient approach is to consider a smoothed approximation instead, since this allows to apply accelerated methods such as FGP: Given a convex optimization problem of the form
\begin{equation}
\min_{\u \in C_1} ~  \phi(\u), \text{ where } \phi(\u) = \max_{\w \in C_2} \inner{\bm B \u}{\w}, \label{eq:generic_problem}
\end{equation}
where $C_1$ and $C_2$ are bounded closed convex sets, Nesterov \cite{Nes05} proposed to approximate the objective functional $\phi$ by
\begin{align}
\phi_\mu(\bm u) := \max_{\w \in C_2} \set{\inner{\bm B \u}{\w} - \frac{\mu}{2}\norm{\w}^2}. \label{eq:nesterov_smoothing}
\end{align}
The associated optimization problem
\begin{align*}
\min_{\u \in C_1} \phi_\mu(\u)
\end{align*}
is called the Nesterov smoothing of \eqref{eq:generic_problem}. It is a convex problem with smooth objective, and can hence be solved fast using accelerated first-order methods.

The next proposition derives the Nesterov smoothing of the dual problem \eqref{eq:tv_ulog_dual}. The derivation is analogous to the proof of proposition 4.1 in \cite{BecTeb09}. For completeness, we have provided the proof below.

\begin{proposition}\label{pro:smoothed_dual}
The Nesterov smoothing corresponding to \eqref{eq:tv_ulog_dual} is given by
{\scriptsize
\begin{equation}
\min_{\v \in S} \quad \left\{\norm{\frac{1}{\mu} \ddiff^\top \v}^2  -\norm{(I - P_{[\Ulow, \Uupp]})(\frac{1}{\mu} \ddiff^\top \v)}^2 \right\}. \label{eq:smoothed_dual}
\end{equation}
}
\end{proposition}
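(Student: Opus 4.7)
The plan is to recast the max-min dual \eqref{eq:tv_ulog_dual} in the standard form \eqref{eq:generic_problem} for which Nesterov smoothing is defined, apply the smoothing \eqref{eq:nesterov_smoothing}, and then evaluate the inner optimum explicitly by completing the square and invoking the definition of the projection onto $[\Ulow, \Uupp]$. The argument is analogous to the proof of Proposition~4.1 in \cite{BecTeb09}, with the main added ingredient being the careful bookkeeping needed to cope with the max-min (rather than min-max) structure of \eqref{eq:tv_ulog_dual}.

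First, I would rewrite \eqref{eq:tv_ulog_dual} as a minimization by negating the outer max and exchanging the inner min for a max of the negative:
\begin{equation*}
\max_{\v \in S} \min_{\w \in [\Ulow, \Uupp]} \inner{\ddiff^\top \v}{\w} \;=\; - \min_{\v \in S} \max_{\w \in [\Ulow, \Uupp]} \inner{-\ddiff^\top \v}{\w}.
\end{equation*}
With $C_1 = S$, $C_2 = [\Ulow,\Uupp]$ and $\bm B = -\ddiff^\top$, the right-hand side matches \eqref{eq:generic_problem}, so \eqref{eq:nesterov_smoothing} yields the smoothed objective $\phi_\mu(\v) = \max_{\w \in [\Ulow,\Uupp]}\{\inner{-\ddiff^\top \v}{\w} - \tfrac{\mu}{2}\norm{\w}^2\}$.

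The key computation is a completion of squares. Setting $\bm c := -\ddiff^\top \v$, one has
\begin{equation*}
\inner{\bm c}{\w} - \tfrac{\mu}{2}\norm{\w}^2 = \tfrac{1}{2\mu}\norm{\bm c}^2 - \tfrac{\mu}{2}\norm{\w - \bm c/\mu}^2,
\end{equation*}
so the inner max reduces to minimizing $\norm{\w - \bm c/\mu}^2$ over the discrete tube. By definition of $P_{[\Ulow,\Uupp]}$, this minimum equals $\norm{(I-P_{[\Ulow,\Uupp]})(\bm c/\mu)}^2$, yielding
\begin{equation*}
\phi_\mu(\v) = \tfrac{\mu}{2}\left\{\norm{\tfrac{1}{\mu}\ddiff^\top\v}^2 - \norm{(I-P_{[\Ulow,\Uupp]})(-\tfrac{1}{\mu}\ddiff^\top\v)}^2\right\}.
\end{equation*}

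Two cosmetic cleanups finish the argument. Since $\mu/2>0$ is a constant multiplier it does not affect the minimizer, so it can be dropped; and since the set $S$ in \eqref{eq:def_s} is symmetric under $\v \mapsto -\v$, substituting $\v$ by $-\v$ inside the minimization removes the offending sign inside the projection while leaving the first term invariant. The result is \eqref{eq:smoothed_dual}. I expect the only delicate point to be this sign bookkeeping: because $[\Ulow,\Uupp]$ is not in general symmetric around zero, the projection is not sign-equivariant, and it is essential to exploit the symmetry of the dual constraint set $S$ at exactly this step.
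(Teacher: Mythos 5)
Your proof is correct and follows essentially the same route as the paper: recasting the max--min dual as an instance of \eqref{eq:generic_problem} with $C_1 = S$, $C_2 = [\Ulow,\Uupp]$, $\bm B = -\ddiff^\top$, completing the square, and invoking the distance-minimizing property of the projection $P_{[\Ulow,\Uupp]}$. Your explicit final step --- using the symmetry $S = -S$ to replace $(I-P_{[\Ulow,\Uupp]})(-\tfrac{1}{\mu}\ddiff^\top\v)$ by $(I-P_{[\Ulow,\Uupp]})(\tfrac{1}{\mu}\ddiff^\top\v)$, which is genuinely needed because the tube is not symmetric about the origin --- is a point the paper's own proof leaves implicit, since its derivation stops at the expression carrying the minus sign.
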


\begin{proof}
First, \eqref{eq:tv_ulog_dual} is equivalent to
\begin{align*}
\min_{\v \in S} \max_{\w \in [\Ulow, \Uupp]} \inner{-\ddiff^\top \v}{\w}.
\end{align*}
This is of the form \eqref{eq:generic_problem} with $C_1 = S$, $C_2 = [\Ulow, \Uupp]$ and $\bm B = -\ddiff^\top$. Hence, its Nesterov smoothing is
\begin{align*}
& \min_{\v \in K} \quad \phi_\mu(\v), \\
& \phi_\mu(\v) := \max_{\w \in [\Ulow, \Uupp]} \left\lbrace \inner{-\ddiff^\top \v}{\w} - \frac{\mu}{2} \norm{\w}^2\right\rbrace.
\end{align*}
By completing the square, one can show
\begin{equation}
\begin{aligned}
\phi_\mu(\v) = \max_{\w \in [\Ulow, \Uupp]} & \left\{ \frac{\mu}{2} \norm{\frac{1}{\mu} \ddiff^\top \v}^2 \right. \\
& \quad \left. - \frac{\mu}{2} \norm{\w + \frac{1}{\mu} \ddiff^\top \v}^2 \right\}.
\end{aligned}\label{eq:completed_squares}
\end{equation}
Recall the distance minimizing property of the orthogonal projection: For a convex set $C$, we have
\begin{align*}
\min_{\w \in C} \norm{\bm q - \w}^2 = \norm{(I - P_C)(\bm q)}^2. 
\end{align*}
In particular, this means
\begin{equation}
\begin{aligned}
& \max_{\w \in [\Ulow, \Uupp]} \norm{\w + \frac{1}{\mu} \ddiff^\top \v}^2 \notag  \\ & \qquad = \norm{(I - P_{[\Ulow, \Uupp]})(-\frac{1}{\mu}\ddiff^\top \v))}^2. \label{eq:explicit_maximizer}
\end{aligned}
\end{equation}
Inserting \eqref{eq:explicit_maximizer} in \eqref{eq:completed_squares} then yields
\begin{align*}
\phi_\mu(\v) = & \frac{\mu}{2} \left(-\norm{(I - P_{[\Ulow, \Uupp]})(- \frac{1}{\mu}\ddiff^\top \v)}^2 \right. \\
&\quad  \left. + \frac{1}{\mu} \norm{\ddiff^\top \v}^2 \right).
\end{align*}
This finishes the proof.
\end{proof}

The smoothed problem \eqref{eq:smoothed_dual} is now amenable to minimization with the fast gradient projection method given by \autoref{alg:fgp} (cf. \cite{Bec17}).

\begin{algorithm}
\caption{FGP}\label{alg:fgp}
\begin{algorithmic}[1]
\Require{A convex function $\phi: \UU \to \R$ on a Euclidean space $\UU$, a convex set $C \subset \UU$, a steplength $\beta > 0$ and an initial guess $\u_0 \in \UU$.}
\State $\v_0 = \u_0$;
\State $t_0 = 1$;
\For{k = 0,\ldots,K}
\State $\u_{k+1} = P_C(\v_k - \beta\nabla \phi(\v_k))$;
\State $t_{k+1} = \frac{1}{2}(1 + \sqrt{1 + 4 t_k^2})$;
\State $\v_{k+1} = \u_{k+1} + \frac{t_k - 1}{t_k+1} (\u_{k+1} - \u_k)$;
\EndFor
\end{algorithmic}
\end{algorithm}

\begin{remark}\label{rem:regularization_dual_smoothing}
Problem \eqref{eq:smoothed_dual} is identical to the dual problem of CTV-denoising (see proposition 4.1 in \cite{BecTeb09}), except for the definition of the operator $\ddiff$. 
Indeed, problem \eqref{eq:smoothed_dual} is the Fenchel dual of the following regularized version of problem \eqref{eq:tv_ulog_discrete},
\begin{align*}
\min_{\bm u \in \R^{N_1 \times N_2 \times K}} \quad & \Normtv(\Normlap \bm u) + \frac{1}{2 \mu} \norm{\bm u}^2\\
\text{s.t.} \quad &\Ulow \leq \bm u \leq \Uupp.
\end{align*}
\end{remark}

\begin{remark}\label{rem:smoothing_parameter}
Finding a good value for the smoothing parameter $\mu$ in \eqref{eq:nesterov_smoothing} is difficult in practice. In \cite{Nes05}, it is shown that, for all $\u \in \UU$,
\begin{align}
\phi_\mu(\u) \leq \phi(\u) \leq \phi_\mu(\u) + \mu D_2, \label{eq:nesterov_bound}
\end{align}
where $D_2 = \max_{\v \in C_2} \frac{1}{2}\norm{\v}^2$. This means that an accuracy of $\epsilon$ in the objective is ensured if $\mu \leq \epsilon / D_2$. However, the bound \eqref{eq:nesterov_bound} is often too conservative and it can be advantageous to choose $\mu > \epsilon / D_2$, since larger values of $\mu$ allow for larger stepsize and faster convergence.
\end{remark}

\subsubsection{Primal smoothing}\label{sssec:primal_smoothing}

As an alternative to the dual approach described in \autoref{sssec:dual_smoothing}, we can also apply Nesterov-smoothing directly to the primal problem \eqref{eq:tv_ulog_discrete}. 

\begin{proposition}\label{pro:smoothed_primal}
Let $\ddiff$ be given by \eqref{eq:ddiff} and define the Huber loss function \cite{Hub81}
\begin{equation}
 h_\mu: \R^{3} \to \R, \quad h_\mu(\bm v) := \begin{cases} \norm{\bm v} + \frac{\mu}{2},& \text{if } \norm{\bm v} \geq \mu, \\
\frac{\norm{\bm v}^2}{2 \mu},& \text{otherwise}.
\end{cases} \label{eq:huber}
\end{equation}
Then the Nesterov smoothing of the TV-ULoG optimization problem \eqref{eq:tv_ulog_discrete} is
\begin{equation}
\begin{aligned}
\min_{\bm u \in \R^{N_1 \times N_2 \times K}} \quad & \sum_{i,j,k} h_\mu((\ddiff \bm u)_{i,j,k}), \\
\text{s.t.} \quad &\Ulow \leq \bm u \leq \Uupp.
\end{aligned}\label{eq:smoothed_primal}
\end{equation}
\end{proposition}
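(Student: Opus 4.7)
My plan is to unfold the definition of the Nesterov smoothing in \eqref{eq:nesterov_smoothing} applied directly to the primal problem \eqref{eq:tv_ulog_discrete}, and then reduce the maximization over the dual variable to a separable one-block calculation that produces the Huber function.

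First, I would rewrite the objective using the dual-norm representation of the Euclidean norm, $\norm{\v} = \max_{\norm{\w}\le 1}\inner{\v}{\w}$. Applied block-wise, this gives
\begin{equation*}
\Normtv(\Normlap \u) = \sum_{i,j,k}\norm{(\ddiff \u)_{i,j,k}} = \max_{\w \in S} \inner{\ddiff \u}{\w},
\end{equation*}
with $\ddiff$ as in \eqref{eq:ddiff} and $S$ as in \eqref{eq:def_s}. This places \eqref{eq:tv_ulog_discrete} into the generic template \eqref{eq:generic_problem} with $C_1=[\Ulow,\Uupp]$, $C_2=S$ and $\bm B = \ddiff$, so the smoothing recipe \eqref{eq:nesterov_smoothing} yields
\begin{equation*}
\phi_\mu(\u) = \max_{\w \in S}\left\{\inner{\ddiff \u}{\w} - \frac{\mu}{2}\norm{\w}^2\right\}.
\end{equation*}

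Next, I would exploit the product structure of both the quadratic penalty $\tfrac{\mu}{2}\norm{\w}^2 = \tfrac{\mu}{2}\sum_{i,j,k}\norm{\w_{i,j,k}}^2$ and the constraint $\w\in S$, which decouple the maximization into independent three-dimensional subproblems
\begin{equation*}
\phi_\mu(\u) = \sum_{i,j,k}\, \max_{\norm{\w_{i,j,k}}\le 1}\left\{\inner{(\ddiff \u)_{i,j,k}}{\w_{i,j,k}} - \frac{\mu}{2}\norm{\w_{i,j,k}}^2\right\}.
\end{equation*}

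Finally, I would evaluate each one-block maximum $M(\v) := \max_{\norm{\w}\le 1}\{\inner{\v}{\w} - \tfrac{\mu}{2}\norm{\w}^2\}$ for $\v\in\R^3$ in closed form. By Cauchy--Schwarz and rotational symmetry the optimal $\w$ has the form $\alpha\,\v/\norm{\v}$ with $\alpha\in[0,1]$, reducing the problem to the scalar quadratic $\max_{\alpha\in[0,1]}\{\alpha\norm{\v} - \tfrac{\mu}{2}\alpha^2\}$. Its unconstrained maximizer is $\alpha^\star = \norm{\v}/\mu$, which lies in $[0,1]$ exactly when $\norm{\v}\le \mu$ (giving value $\norm{\v}^2/(2\mu)$); otherwise the optimum is attained at the boundary $\alpha=1$ (giving value $\norm{\v} - \mu/2$). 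Matching these two branches to \eqref{eq:huber} and inserting into the sum above gives \eqref{eq:smoothed_primal}.

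The main obstacle is conceptual rather than technical: correctly identifying the dual set $C_2=S$ and $\bm B=\ddiff$ so that the TV-ULoG problem fits the Nesterov template, and justifying the swap between sum and maximum used in the decoupling step. The remaining one-block computation is an elementary constrained quadratic.
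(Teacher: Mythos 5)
Your proof is correct and takes essentially the same route as the paper: blockwise dual-norm representation of the TV term, identification of the Nesterov template \eqref{eq:generic_problem} with $C_1=[\Ulow,\Uupp]$, $C_2=S$, $\bm B=\ddiff$, decoupling of the maximization over $S$ into independent three-dimensional blocks, and closed-form evaluation of each block maximum (which the paper merely asserts is "straightforward to show, e.g. using Lagrange multipliers"). One remark: your explicit calculation gives the value $\norm{\v}-\mu/2$ on the branch $\norm{\v}\geq\mu$, which is the correct one (continuity at $\norm{\v}=\mu$ forces the minus sign) and exposes a sign typo in \eqref{eq:huber}; the two branches therefore do not literally "match" the displayed $h_\mu$ as you claim, though your computation is right.
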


\begin{proof}
Let
\begin{align}
\phi(\bm u) := \sum_{i,j,k} \norm{(\bm A \bm u)_{i,j,k}}, \label{eq:phi_for_primal}
\end{align}
such that we can write the TV-ULoG optimization problem \eqref{eq:tv_ulog_discrete} as
\begin{align}
\min_{\bm u \in [\Ulow, \Uupp]} \phi(\bm u). \label{eq:tv_ulog_rewrite}
\end{align}
Using the identity
\begin{align*}
\norm{\v} = \max_{\w \in B_1(0)} \inner{\w}{\v},
\end{align*}
in \eqref{eq:phi_for_primal} yields
\begin{align*}
\phi(\u) &= \sum_{i,j,k} \max_{\w_{i,j,k} \in B_1(0)} \inner{\w_{i,j,k}}{(\ddiff \u)_{i,j,k}} \\
& = \max_{\w \in S}\inner{\w}{\bm A \bm u},
\end{align*}
where $S$ is again given by \eqref{eq:def_s}. Hence, \eqref{eq:tv_ulog_rewrite} is of the form \eqref{eq:generic_problem} with $C_1 = [\Ulow, \Uupp]$ and $C_2 = S$, which means that the Nesterov-smoothed objective is given by (cf. \eqref{eq:nesterov_smoothing})
\begin{align}
\phi_\mu(\bm u) & = \max_{\w \in S} \left(\inner{\w}{\u} - \frac{\mu}{2} \norm{\w}^2 \right) \notag \\
&= \sum_{i,j,k} \max_{\w_{i,j,k} \in B_1(0)} \left(\inner{\w_{i,j,k}}{(\bm A \u)_{i,j,k}} - \frac{\mu}{2} \norm{\w_{i,j,k}}^2 \right). \label{eq:nesterov_smoothed_primal}
\end{align}
It is straightforward to show (e.g. using Lagrange multipliers) that, for any vector $\v$,
\begin{align}
\max_{\w \in B_1(0)} \left( \inner{\v}{\w} - \frac{\mu}{2} \norm{\w}^2 \right) = h_\mu(\v), \label{eq:max_is_huber}
\end{align}
where $h_\mu$ is defined in \eqref{eq:huber}. Using \eqref{eq:max_is_huber} in \eqref{eq:nesterov_smoothed_primal} proves the proposition.
\end{proof}
Problem \eqref{eq:smoothed_primal} is a smooth optimization problem with bound constraints for which there exists a huge number of optimization methods, such as the previously introduced FGP method or the popular L-BFGS-B method \cite{ByrLuNocZhu95,ZhuByrLuNoc97,MorNoc11}.

\subsubsection{Interior-point method}\label{sssec:socp}

Many problems involving TV-regularization, in particular $L^1$-TV denoising and CTV-denoising, can be formulated equivalently as second-order cone programs (SOCP) and then be solved with interior-point methods \cite{YinGolOsh07}. This strategy has the advantage that it does not require additional smoothing. Interior-point methods are also known to be much more robust against ill-conditioning of the KKT system. They require the solution of a large linear system in every step, which is why they sometimes do not scale well to larger problems. However, if the special structure of the linear system can be exploited, interior-point methods can yield state-of-the-art performance \cite{GolYin05}.

We call an optimization problem a SOCP if it can be written in the following form \cite{BoyVan04}:
\begin{equation}
\begin{aligned}
\min_{\v \in \R^n} \quad & \inner{\bm \xi}{\v} \\
\text{s. t. } \quad & \norm{\bm B_i \v + \bm c_i} \leq \inner{\bm d_i}{\v} + \eta_i, \quad i \in [m],\\
& \bm H \v = \bm h,
\end{aligned} \label{eq:socp_standard_form}
\end{equation}
where $\bm \xi, \bm d_i \in \R^n$, $\bm B_i \in \R^{n_i \times n}$,  $\bm c_i \in \R^{n_i}$, $\eta_i \in \R$, $\bm H \in \R^{p \times n}$, and $\bm h \in \R^p$. 

The next proposition states how the TV-ULoG optimization problem \eqref{eq:tv_ulog_discrete} can be brought into the form \eqref{eq:socp_standard_form}. To prepare the proof, we bring the primal problem in flattened form so that we work with vectors instead of three-dimensional arrays. To this end, let $N := N_1 \cdot N_2 \cdot K$ and define the flattening operator
\begin{align*}
& \flatten: \R^{N_1 \times N_2 \times K} \to \R^{N}, \\
& \flatten(\bm u)_{\sigma(i,j,k)} = u_{i,j,k},
\end{align*}
where
\begin{align*}
& \sigma: [N_1] \times [N_2] \times [K] \to [N], \\
& \sigma(i, j, k) = i N_2 K + j K + k
\end{align*}
is an enumeration of $[N_1] \times [N_2] \times [K]$. Let
\begin{align}
\zlow := \flatten(\Ulow), \quad \zupp := \flatten(\Uupp). \label{eq:flat_bounds}
\end{align}
Then, one can find matrices $\ddiffflat_1,\ldots, \ddiffflat_N \in \R^{3 \times N}$ such that
\begin{align}
(\ddiffflat_{\sigma(i,j,k)} \flatten(\u))_r = (\ddiff \u)_{i,j,k,r} \label{eq:flat_operator}
\end{align}
for all $\u \in \R^{N_1 \times N_2 \times K}$, $r \in [3]$ and $(i,j,k) \in [N_1] \times [N_2] \times [K]$. Under these definitions, it is easy to see that the TV-ULoG optimization problem \eqref{eq:tv_ulog_discrete} is equivalent to
\begin{equation}
\begin{aligned}
\min_{\bm z \in \R^N} \quad & \sum_{\ell=1}^N \norm{\ddiffflat_\ell \bm z} \\
\text{s. t.} \quad & \zlow \leq \bm z \leq \zupp,
\end{aligned}\label{eq:vectorized_tv_ulog}
\end{equation}
where $\bm z = \flatten(\u)$.

\begin{proposition}\label{pro:socp}
The optimization problem \eqref{eq:vectorized_tv_ulog} is equivalent to the SOCP \eqref{eq:socp_standard_form}, under the identifications
{\scriptsize%
\begin{equation}
\begin{aligned}
& n = 2 N, \quad m = 3N, \quad \bm \xi = \begin{bmatrix}
\bm{0}_N \\ \bm{1}_N
\end{bmatrix}, \\
& \bm B_\ell = \bm{0}_{1\times 2N}, \quad \bm B_{N + \ell} = \bm{0}_{1 \times n}, \quad \bm B_{2N + \ell} = \begin{bmatrix} \ddiffflat_\ell & \bm{0}_{3 \times n} \end{bmatrix}, \\
& \bm c_\ell = 0, \quad \bm c_{N + \ell} = 0, \quad \bm c_{2N + \ell} = \bm{0}_3, \\
& \bm d_\ell = \bm e_\ell^{n}, \quad \bm d_{N + \ell} = - \bm e_\ell^{n}, \quad \bm d_{2N + i} = \bm e_{N + \ell}, \\
& \eta_\ell = -z^\mathrm{low}_\ell, \quad \eta_{N + \ell} = z^\mathrm{upp}_\ell, \quad \eta_{2N + \ell} = 0, \quad \ell \in [N], \\
& \bm H = \bm{0}_{1 \times n} , \quad \bm h = 0.
\end{aligned}\label{eq:transform_to_socp}
\end{equation}}%
In particular, if $\bar{\v} \in \R^{2N}$ is a minimizer of \eqref{eq:socp_standard_form}, then $\bar{\u}$ given by
\begin{align*}
\bar u_{i,j,k} = \bar v_{\sigma(i,j,k)}
\end{align*}
is a minimizer of the TV-ULoG optimization problem \eqref{eq:tv_ulog_discrete}.
\end{proposition}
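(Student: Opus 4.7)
The plan is to apply the standard epigraph trick. I would first introduce slack variables $\bm{s} \in \R^N$ and rewrite \eqref{eq:vectorized_tv_ulog} in the equivalent form
\begin{equation*}
\min_{\z, \bm{s} \in \R^N} \sum_{\ell=1}^N s_\ell \quad \text{s.t.} \quad \norm{\ddiffflat_\ell \z} \leq s_\ell \text{ for all } \ell \in [N], \quad \zlow \leq \z \leq \zupp.
\end{equation*}
Equivalence is immediate: at any minimizer each inequality $\norm{\ddiffflat_\ell \z} \leq s_\ell$ must be tight (otherwise $s_\ell$ could be decreased, strictly improving the objective while remaining feasible), so the optimal $\bar{\z}$ and the optimal value agree for both problems.

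Next, I would stack the variables as $\v = (\z, \bm{s}) \in \R^{2N}$, so that $n = 2N$. The linear objective $\sum_\ell s_\ell = \inner{\bm{1}_N}{\bm{s}}$ then agrees with $\inner{\bm{\xi}}{\v}$ for the $\bm{\xi}$ specified in \eqref{eq:transform_to_socp}. The $m = 3N$ conic inequalities of \eqref{eq:socp_standard_form} split into three batches of size $N$, matching the three families of constraints in the epigraph reformulation. For $\ell \in [N]$, the lower bound $z_\ell \geq z_\ell^\mathrm{low}$ is rewritten as the degenerate SOC constraint $\norm{\bm{0}} \leq \inner{\bm{e}_\ell^n}{\v} - z_\ell^\mathrm{low}$, which fixes $\bm{B}_\ell$, $\bm{c}_\ell$, $\bm{d}_\ell$, $\eta_\ell$. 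The upper bound $z_\ell \leq z_\ell^\mathrm{upp}$ is handled identically with the linear term flipped in sign, producing the second batch. For the third batch, the norm inequality $\norm{\ddiffflat_\ell \z} \leq s_\ell$ is reinterpreted as $\norm{[\ddiffflat_\ell \ \bm{0}_{3 \times N}]\v + \bm{0}_3} \leq \inner{\bm{e}_{N+\ell}^n}{\v}$, so that $\bm{B}_{2N+\ell}$ selects $\ddiffflat_\ell \z$ from $\v$ while $\bm{d}_{2N+\ell}$ selects the slack $s_\ell$. No equality constraints are needed, so $\bm{H}$ and $\bm{h}$ can be taken to be zero. Reading off coefficients term by term yields exactly \eqref{eq:transform_to_socp}.

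Finally, the correspondence of minimizers follows from the equivalence above: given a minimizer $\bar{\v} = (\bar{\z}, \bar{\bm{s}})$ of the SOCP, tightness of the third-batch constraints implies that $\bar{\z}$ solves \eqref{eq:vectorized_tv_ulog}, and therefore $\bar{\u}$ defined by $\bar u_{i,j,k} = \bar v_{\sigma(i,j,k)}$ solves the TV-ULoG problem \eqref{eq:tv_ulog_discrete}, since this is just inverting the enumeration $\sigma$ that underlies $\flatten$.

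The argument is essentially bookkeeping rather than genuinely hard. The only points that require mild care are keeping the block dimensions consistent (in particular, the second block in $\bm{B}_{2N+\ell}$ must have size $3 \times N$ so that $\bm{B}_{2N+\ell}$ has shape $n_{2N+\ell} \times n = 3 \times 2N$) and verifying that the epigraph reformulation preserves not just optimal values but also the set of minimizers, which is exactly what the tightness argument at the start of the proof ensures.
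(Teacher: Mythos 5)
Your proof is correct and follows essentially the same route as the paper: the epigraph reformulation with $N$ slack variables, stacking $\v = (\bm z, \bm q) \in \R^{2N}$, and reading off the three batches of (partly degenerate) second-order cone constraints. You in fact supply slightly more detail than the paper does, namely the tightness argument showing that the epigraph reformulation preserves minimizers and the observation that the second block of $\bm B_{2N+\ell}$ must have width $N$ for the dimensions to be consistent.
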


\begin{proof}
Note that \eqref{eq:vectorized_tv_ulog} is equivalent to
\begin{equation}
\begin{aligned}
\min_{\bm z \in \R^N, \bm q \in \R^N} \quad & \sum_{\ell=1}^N q_i \\
\text{s. t.} \quad & \zlow \leq \bm z \leq \zupp, \\
& \norm{\ddiffflat_\ell \bm z} \leq q_\ell, \quad \ell \in [N].
\end{aligned}\label{eq:bound_trick}
\end{equation}
Let us combine the optimization variables $\bm z$ and $\bm q$ in a single vector $\v = [\bm z, \bm q] \in \R^{2 N}$, such that \eqref{eq:bound_trick} becomes
\begin{equation}
\begin{aligned}
\min_{\v \in \R^{2N}} \quad & \sum_{\ell=1}^N v_{N + \ell}, \\
\text{s. t.} \quad & z^\mathrm{low}_\ell \leq v_\ell \leq z^\mathrm{upp}_\ell, \\
& \norm{\begin{bmatrix} \ddiffflat_\ell & \bm{0}_{3\times N} \end{bmatrix} \v} \leq v_{N + \ell}, \quad \ell \in [N].
\end{aligned}\label{eq:bound_trick2}
\end{equation}
It is now straightforward to check that if we plug the identifications \eqref{eq:transform_to_socp} into the SOCP standard form \eqref{eq:socp_standard_form}, we obtain precisely \eqref{eq:bound_trick2}.
\end{proof}

The resulting SOCP can then be solved efficiently with existing interior-point solvers (see also \autoref{subsubsec:comparison_optimization_methods}), exploiting the sparse structure of the discretized differential operator $\ddiff$.

\subsection{Extraction of blob regions}\label{subsec:extraction}

Let $\Blanket$ be a numerical solution of the TV-ULoG optimization problem \eqref{eq:tv_ulog_discrete}. Ideally, $\Normlap \Blanket$ is piecewise constant such that it attains its local minima on index sets $\mathcal M_1, \ldots, \mathcal M_S \subset [N_1] \times [N_2] \times [K]$ which quantify our uncertainty with respect to the blobs of the uncertain image (see \autoref{sec:tube_methods}). 

However, due to numerical errors, $\Normlap \Blanket$ will typically only be approximately piecewise constant or exhibit artifacts such as staircasing. For this reason, we use the following thresholding procedure to extract the desired regions.
\begin{enumerate}
\item Let $\bm a := \Normlap \Blanket \in \R^{N_1 \times N_2 \times K}$.
\item Detect the local minimizers $\bm m_s, \ldots, \bm m_S \in [N_1] \times [N_2] \times [K]$ of $\bm a$.
\item For each local minimizer $m_s = (i_s, j_s, k_s)$, detect the corresponding plateau, i.e. the largest connected component $\mathcal M_s \in [N_1] \times [N_2] \times [K]$ with $m_s \in \mathcal M_s$ such that
\begin{align*}
a_{i,j,k} \leq r a_{i_s,j_s,k_s} \quad \text{for all } (i,j,k) \in \mathcal M_s.
\end{align*}
\end{enumerate}
Here, $r \in (0, 1)$ is a relative threshold that determines the size of the resulting regions. If we choose $r$ closer to 1, the resulting regions will be tighter but the results of the method will be less robust against numerical errors.

\subsection{Visualization}\label{subsec:visualization}

The result of the extraction step are regions $\mathcal M_1, \ldots, \mathcal M_S \subset [N_1] \times [N_2] \times [K]$. Since these are sets in a discrete three-dimensional space, they are difficult to visualize directly. The extent of the regions along the first two axes (the spatial domain) expresses uncertainty with respect to position of the corresponding blob,  while the extent along the third axis (the scale)  corresponds to uncertainty in the size of the corresponding blob (see the middle panel in \autoref{fig:stellar_recovery} below).

We suggest to visualize the uncertainty in scale and position by introducing two projections on the pixel grid $[N_1] \times [N_2]$ which are easy to visualize as images.

The first projection is the direct projection on the image domain,
\begin{equation}
\scriptscriptstyle
\begin{aligned}
& \Pi_1: 2^{[N_1] \times [N_2] \times [K]} \to 2^{[N_1] \times [N_2]}, \\
& \Pi_1 (\mathcal M) :=  \Set{(i,j)}{\exists k \in [K]: (i,j,k) \in \mathcal M}.
\end{aligned}\label{eq:projection1}
\end{equation}
The set $\Pi_1(\mathcal M)$ is a two-dimensional region that contains the centers of all blobs in $\mathcal M$, and can thus be used to visualize the uncertainty in the center position of the uncertain blob.

The second projection is motivated by the visualization for the discrete Laplacians-of-Gaussians method, where a point $(i, j, k) \in [N_1] \times [N_2] \times [K]$ is visualized by a (discretized) circle $B_{\sqrt{2 t_k}}(i,j)$.  Taking the possibly different grid sizes $h_1$ and $h_2$ into account, the set $B_r(i,j) \subset [N_1] \times [N_2] \times [K]$ is defined as
{\scriptsize
\begin{align*}
B_r(i,j) = \Set{(i', j')}{\sqrt{h_1^2(i' - i)^2 + h_2^2 (j' - j)^2} \leq r}.
\end{align*}}
We then define the projection of a set $\mathcal M \subset [N_1] \times [N_2] \times [K]$ onto the union over all circles corresponding to points in $\mathcal M$ by
\begin{equation}
\scriptscriptstyle
\begin{aligned}
& \Pi_2: 2^{[N_1] \times [N_2] \times [K]} \to 2^{[N_1] \times [N_2]}, \\
& \Pi_2 (\mathcal M) :=  \bigcup_{(i, j, k) \in \mathcal M} B_{\sqrt{2 t_k}}(i, j).
\end{aligned}\label{eq:projection2}
\end{equation}
Together, $\Pi_1(\mathcal M)$ and $\Pi_2(\mathcal M)$ allow to visualize the uncertainty in the blob center and the blob scale within a set $\mathcal M$. An example of this visualization is provided in \autoref{fig:stellar_recovery} (middle panel). \autoref{fig:deconvolution_blobs} gives an example how this visualization can be used on one-dimensional signals.

\subsection{Further remarks}

As mentioned in the beginning of \autoref{subsec:implementation_tv}, the TV-ULoG optimization problem \eqref{eq:tv_ulog_discrete} has many parallels to CTV-denoising. For example, \cite{WeiBlaAub09} considers constrained-TV regularization and proposes a Nesterov smoothing approach similar to the one we describe in \autoref{sssec:primal_smoothing}. The TV-ULoG optimization problem could also be attacked with ADMM, similar to the method presented in \cite{ChaTaoYua13}.

However, there are two significant differences between the TV-ULoG optimization problem and  the CTV-denoising problem \eqref{eq:ctv_denoising}, apart from the higher dimensionality:
First, the CTV-denoising problem depends through the TV-term on the first derivative of the image, while in the TV-ULoG optimization problem we take the total variation of the normalized Laplacian, which means that we depend on the third-order derivatives. The resulting discrete differential operator $\ddiff$ has a high condition number, which makes the TV-ULoG optimization problem more difficult to solve numerically. This problem is amplified by the fact that there is no denoising term in the TV-ULoG optimization problem \eqref{eq:tv_ulog_discrete}, since this term otherwise has a regularizing effect. These differences could explain the problems of the proposed first-order methods we observed in our numerical experiments (see \autoref{subsubsec:comparison_optimization_methods}).

Alternatively, problem \eqref{eq:tv_ulog_discrete} could be solved with a semi-smooth Newton method \cite{HinItoKun02}, which is likely more accurate than the first-order methods. Such an approach would scale similar as the interior-point strategy.

A final option that we did not investigate further are graph-cut methods \cite{Cha05, DarSig06}, which could potentially be very efficient for our approach, since the result of the TV-ULoG method should be robust against the quantization error. However, such a method would be more difficult to implement.

\section{Numerical experiments}\label{sec:numerics}

In this section, we illustrate the tube-based blob detection methods introduced in this paper on two Bayesian inverse problems. 

We start with a one-dimensional problem which mostly serves didactical purposes, since the scale-space representation of a one-dimensional signal is two-dimensional and can therefore be visualized as an image. This allows us to further illustrate and discuss the ideas underlying our approach. 

As second example we consider a more challenging two-dimensional imaging problem from stellar dynamics since this application was a main motivation for the present work.

The numerical experiments were implemented in Python 3.10. The source code is available from the GitHub repository \href{https://github.com/FabianKP/tvulog}{github.com/FabianKP/tvulog}, which also provides an exact list of the used packages. The reported computation times were measured on a PC with 64 GB RAM and 8 3.6-GHz Intel i7-7700 CPUs.

\subsection{One-dimensional deconvolution}\label{subsec:deconvolution}

\subsubsection{Problem setup}\label{sssec:deconvolution_setup}

\begin{figure}[h]
\centering
\includegraphics[width=1\columnwidth]{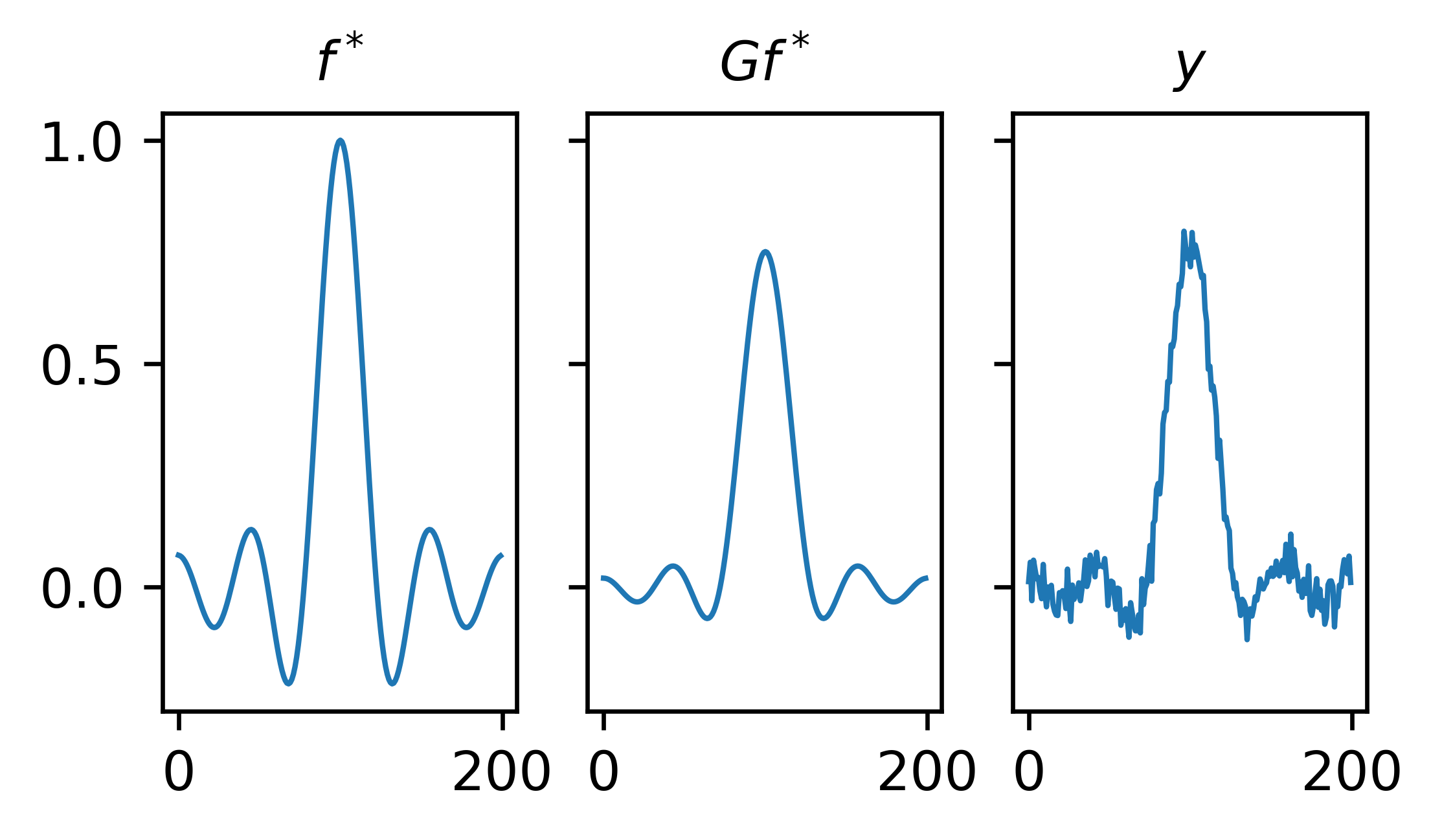}
\caption{Setup of the deconvolution problem. Left panel: The ground truth $\f^*$. Middle panel: The convolved ground truth $\Fwd \f^*$. Right panel: The noisy data $\bm y = \Fwd \f^* + \bm w^*$.}\label{fig:deconvolution_setup}
\end{figure}

We tested the proposed method on the problem of blob detection in one-dimensional Bayesian deconvolution. That is, we considered the task of identifying blobs in a one-dimensional discrete signal, modelled as realization of a random vector $\F$, using data $\Y$ which is given by the noisy convolution
\begin{align*}
\Y = \Fwd \F + \bm W,
\end{align*}
where $\Fwd: \R^N \to \R^N$ is a convolution operator, and $\bm W$ is zero-mean Gaussian noise. While we described the proposed methodology only for the case of two-dimensional signals (images), the adaptation to one-dimensional signals is completely straightforward, and we skip it to avoid repetition.

To define a full Bayesian model for the deconvolution problem, we assign a zero-mean Gaussian prior on $\F$,
\begin{align}
p_\F = \normal(\bm{0}, \bm \Sigma), \label{eq:deconvolution_prior}
\end{align}
where $\bm \Sigma \in \R^{N \times N}$ is the prior covariance matrix. In our experiments, we used a prior covariance $\bm \Sigma$ corresponding to a Gaussian random Markov field prior. 

Assuming that the noise $\bm W$ is uncorrelated with constant standard deviation $\gamma$, we arrive at the likelihood
\begin{align}
p_{\Y | \F}(\cdot \given \f) = \normal(\Fwd \f, \gamma^2 \Idmat), \quad \f \in \R^N. \label{eq:deconvolution_likelihood}
\end{align}
Combining \eqref{eq:deconvolution_prior} and \eqref{eq:deconvolution_likelihood} via Bayes theorem yields the posterior density
{\scriptsize%
\begin{align*}
p_{\F | \Y}(\f \given \y) \propto \exp\left(-\frac{1}{2 \gamma^2} \norm{\Fwd \f - \y}^2 -\frac{1}{2} \norm{\bm \Sigma^{-1/2} \f}^2 \right).
\end{align*}}

\subsubsection{Simulation}

\begin{figure}[h]
\centering
\includegraphics[width=0.9\columnwidth]{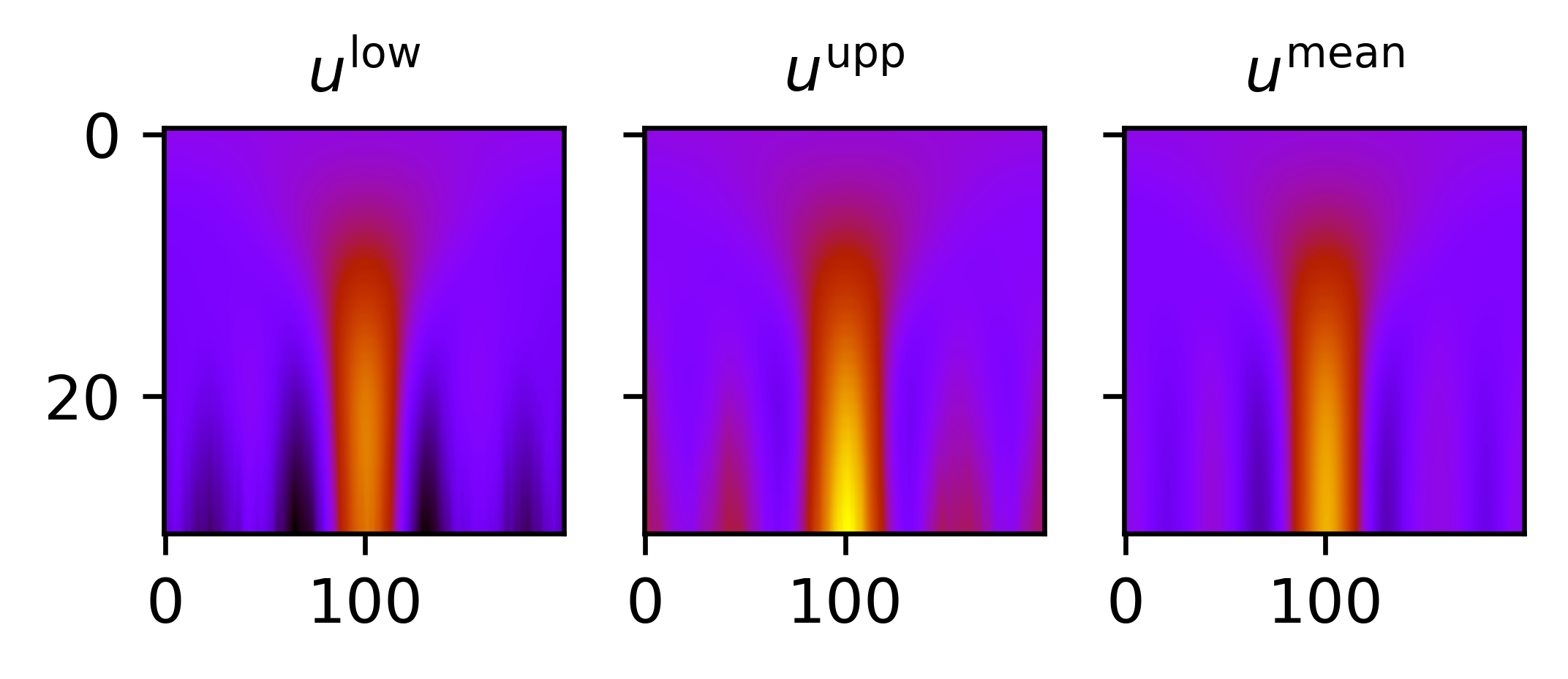}
\caption{The lower and upper bound of the scale-space tube $[\Ulow, \Uupp]$ for the one-dimensional deconvolution problem. The scale-space representation of the posterior mean is plotted in the right panel for comparison.}\label{fig:deconvolution_tube}
\end{figure}

\begin{figure}[h]
\centering
\includegraphics[width=0.8\columnwidth]{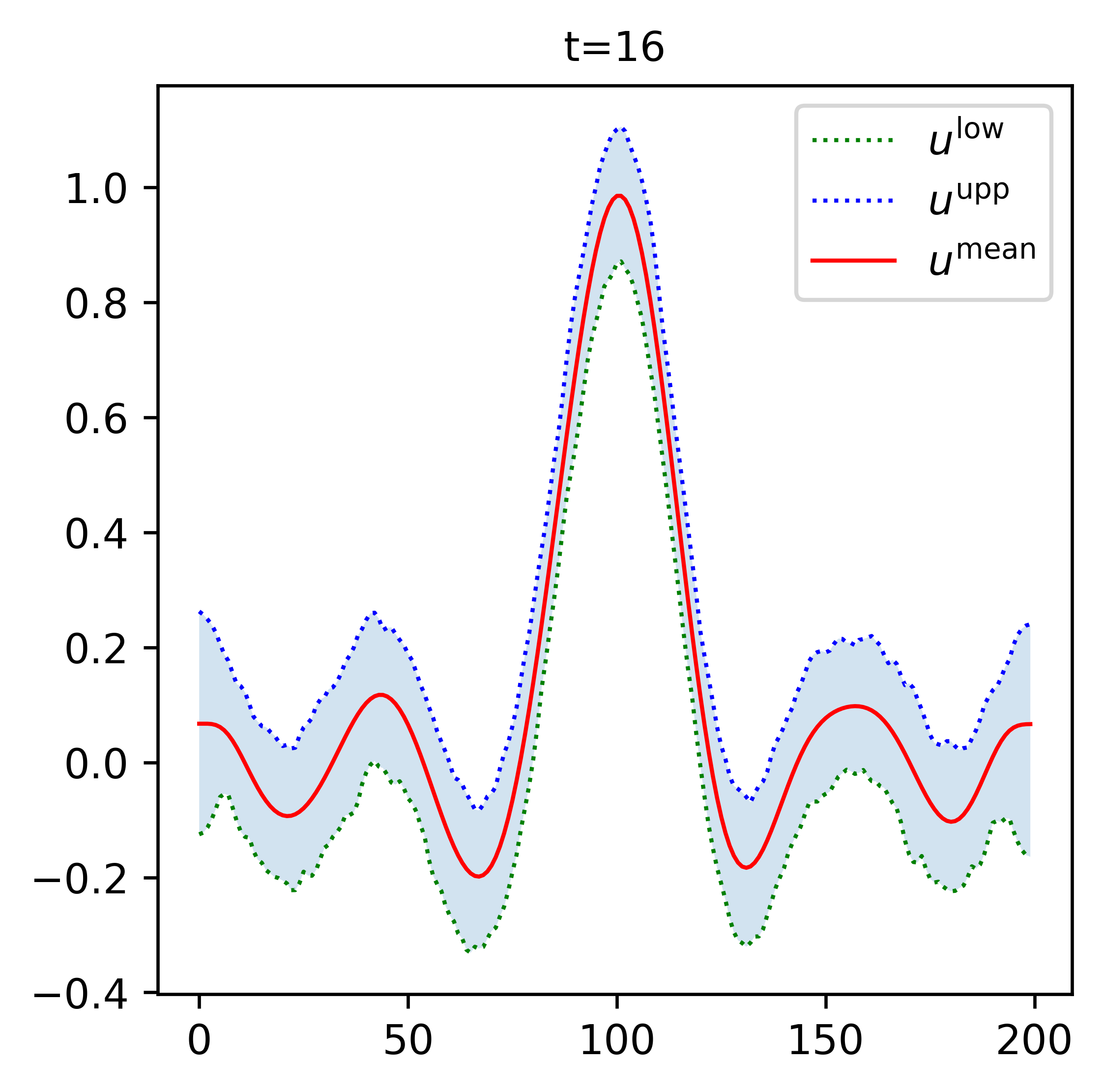}
\caption{Slice through the scale-space tube $[\Ulow, \Uupp]$ for a fixed scale.}\label{fig:deconvolution_tube_slice}
\end{figure}

For our numerical experiment, we used a sinusoidal ground truth $\f^* \in \R^{N \times N}$ (see \autoref{fig:deconvolution_setup}) with $N=200$, from which we simulated noisy data $\bm y = \Fwd \f^* + \bm w$, where $\bm w$ was generated from $\normal(\bm{0}, \gamma^2 \bm I)$ with $\gamma=0.03$. 

We then generated 10 000 MCMC samples using the Linear-RTO sampler provided by the CUQIpy Python package \cite{RiiAlgUriChrAfk23}. Using these MCMC samples, we computed a scale space tube $[\Ulow, \Uupp]$ for the uncertain scale-space representation $\U = \bm \Phi \F$ using the heuristic method described in \autoref{app:tubes_from_mcmc} for the crediblity parameter $\alpha = 0.05$ (corresponding to 95\% credibility). We used exponentially increasing scales (cf. \eqref{eq:exponential_scales})
\begin{equation}
\begin{aligned}
t_k & = b^{k-1} t_\text{min}, \qquad k \in [K],\\
b & = \left(\frac{t_\text{max}}{t_\text{min}}\right)^\frac{1}{K-1},
\end{aligned}\label{eq:discrete_scales}
\end{equation}
with $K=30$, $t_\text{min} = 1$ and $t_\text{max} = 70^2$. The two-dimensional objects $\Ulow$ and $\Uupp$ are visualized in \autoref{fig:deconvolution_tube}. Since it is hard to see the difference between the lower and uppper bound with the naked eye, we have plotted a horizontal slice (that is, a slice for a fixed scale) in \autoref{fig:deconvolution_tube_slice}. For reference, we also computed a point estimate for the signal of interest $\f$ in form of the posterior mean, given by
\begin{align*}
\f^\text{mean} = \frac{1}{S} \sum_{s=1}^S \f^{(s)},
\end{align*}
where $(\f^{(s)})_{s=1}^S$ are the computed MCMC samples. We denote the scale-space representation of $\f^\text{mean}$ by $\bm u^\text{mean}$.

\subsubsection{Results}

\begin{figure*}[h]
\includegraphics[width=2\columnwidth]{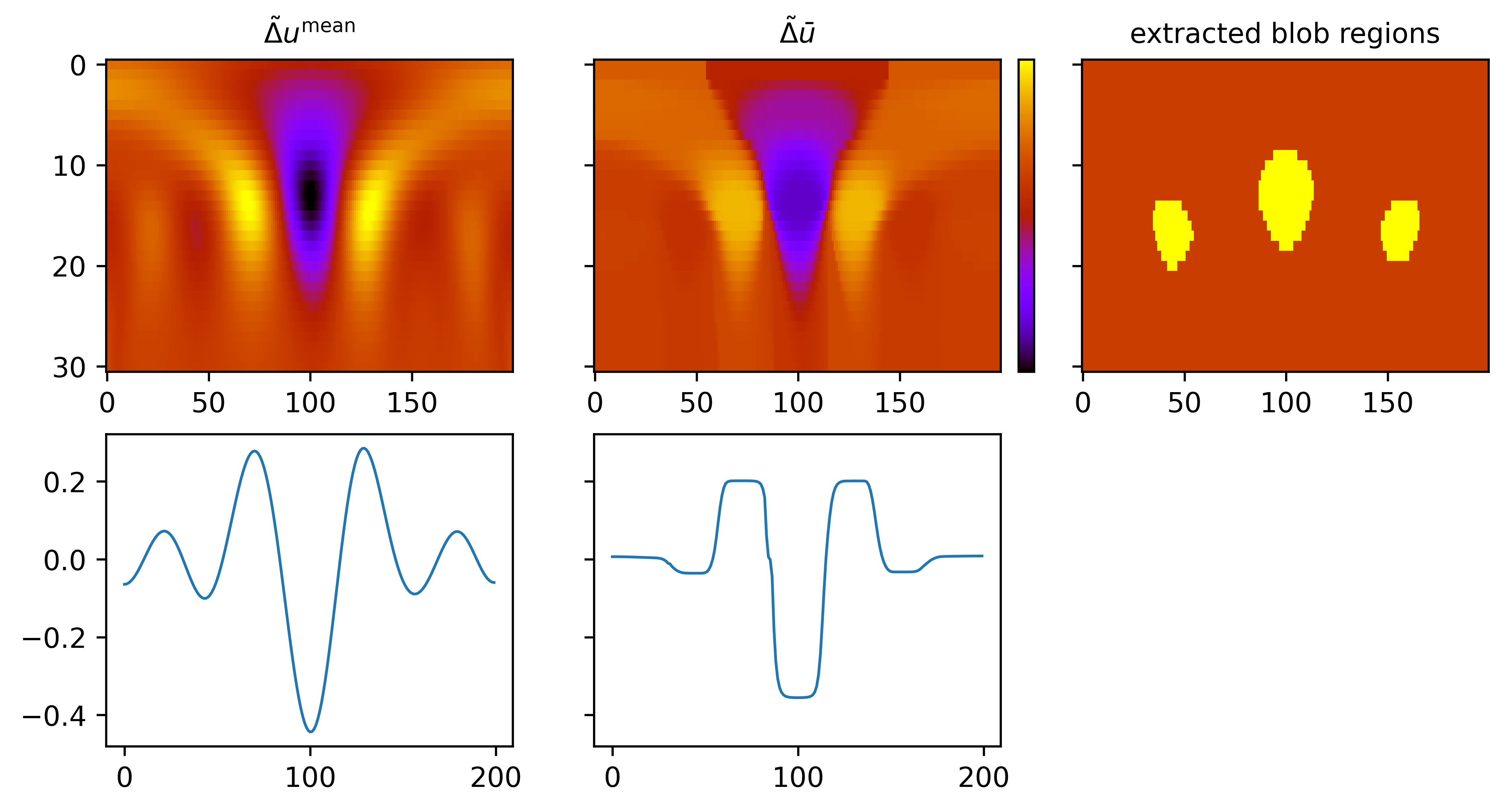}
\caption{The result of the TV-approach for the one-dimensional deconvolution problem. First row, left panel: the scale-normalized Laplacian of the posterior mean; middle panel: the scale-normalized Laplacian of the minimizer of \eqref{eq:tv_ulog_discrete}; right panel: the blob regions in scale space extracted from $\normlap \bar{u}$. Second row: Corresponding horizontal slice for fixed scale $t_k$, $k=15$.}\label{fig:tv_minimizer}
\end{figure*}

\begin{figure}[h]
\centering
\includegraphics[width=0.8\columnwidth]{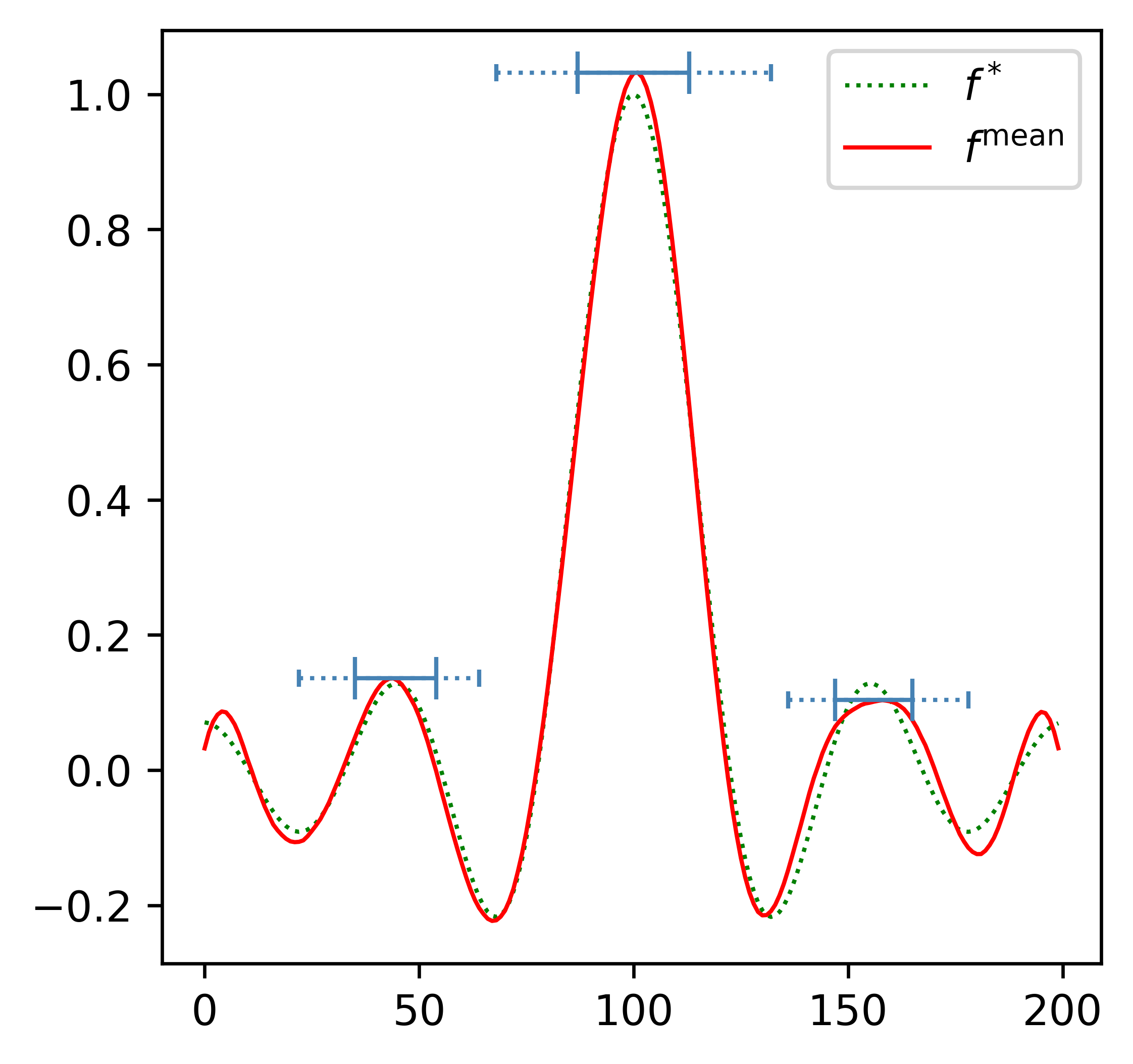}
\caption{Plot of the posterior mean $\f^\text{mean}$ for the one-dimensional deconvolution problem together with the uncertain blobs. The projected blob sets are visualized by blue horizontal bars, where the solid bar indicates the center projection and the dotted bar indicates the scale projection. The ground truth $\f^*$ (dotted line) is also plotted for reference.}\label{fig:deconvolution_blobs}
\end{figure}

To solve the optimization problem \eqref{eq:tv_ulog_discrete}, we used the interior-point approach since it was by far the most efficient (see \autoref{subsubsec:comparison_optimization_methods} below). The normalized Laplacian of the computed minimizer is plotted in \autoref{fig:tv_minimizer} (middle panel). Compared to the normalized Laplacian of the posterior mean (left panel), the scale-normalized Laplacian is approximately piecewise constant and attains local minima on four clearly separated regions, which were extracted using the thresholding procedure described in \autoref{subsec:extraction} (right panel). Since these regions are difficult to make out with the bare eye, a horizontal slice (that is, for fixed scale) is plotted in the second row of \autoref{fig:tv_minimizer}.

In \autoref{fig:deconvolution_blobs}, we plot the extracted blob regions using the procedure described in \autoref{subsec:visualization}, together with the posterior mean and the ground truth for comparison. The horizontal solid bars are obtained from the projection $\Pi_1$ \eqref{eq:projection1}. That is, they indicate the intervals in which we expect the blob \emph{centers} to lie with 95\%-confidence. Similarly, the dotted bars are obtained from the projection $\Pi_2$ \eqref{eq:projection2} and indicate the maximal extent of the uncertain blob.

\subsection{Integrated-light stellar population recovery}\label{subsec:stellar_recovery}

\subsubsection{Problem setup}

Next, we revisit the problem of integrated-light stellar population recovery. This is a constrained linear imaging inverse problem of the form
\begin{align*}
\Y = \Fwd \F + \bm W, \qquad \bm F \geq 0,
\end{align*}
where $\F$ is a two-dimensional non-negative density function (modelled as random image), $\Y$ is a measured light spectrum and $\bm W$ is zero-mean uncorrelated Gaussian noise. The observation operator $\Fwd: \R^{N_1 \times N_2} \to \R^M$ is the discretization of an integral operator that models how the density $\bm F$ influences the spectrum. We do not discuss the details of this problem and the Bayesian modelling and instead refer the reader to the previous work \cite{ParJetBoeAlfSch23a}.

\subsubsection{Simulation}

For our numerical experiment, we simulated a realization $\y$ of the noisy spectrum $\Y$ from a ground truth $\F = \f^*$ (\autoref{fig:stellar_recovery}, top panel) and generated 10 000 posterior samples (after 5000 burn-in iterations) using the SVD-MCMC method described in \cite[section 4]{ParJetBoeAlfSch23a}. As in the one-dimensional deconvolution example, we computed a scale-space tube $[\Ulow, \Uupp]$ using the method described in \autoref{app:tubes_from_mcmc} for the credibility parameter $\alpha = 0.05$ and discrete scales given by \eqref{eq:discrete_scales} with $K=16$, $t_\text{min} = 1$ and $t_\text{max} = 30^2$.

For reference, we also computed a point estimate for the signal of interest $\F$, this time in form of the maximum-a-posteriori estimate $\f^\text{MAP}$.

\subsubsection{Results}

\begin{figure}[!h]
\includegraphics[width=\columnwidth]{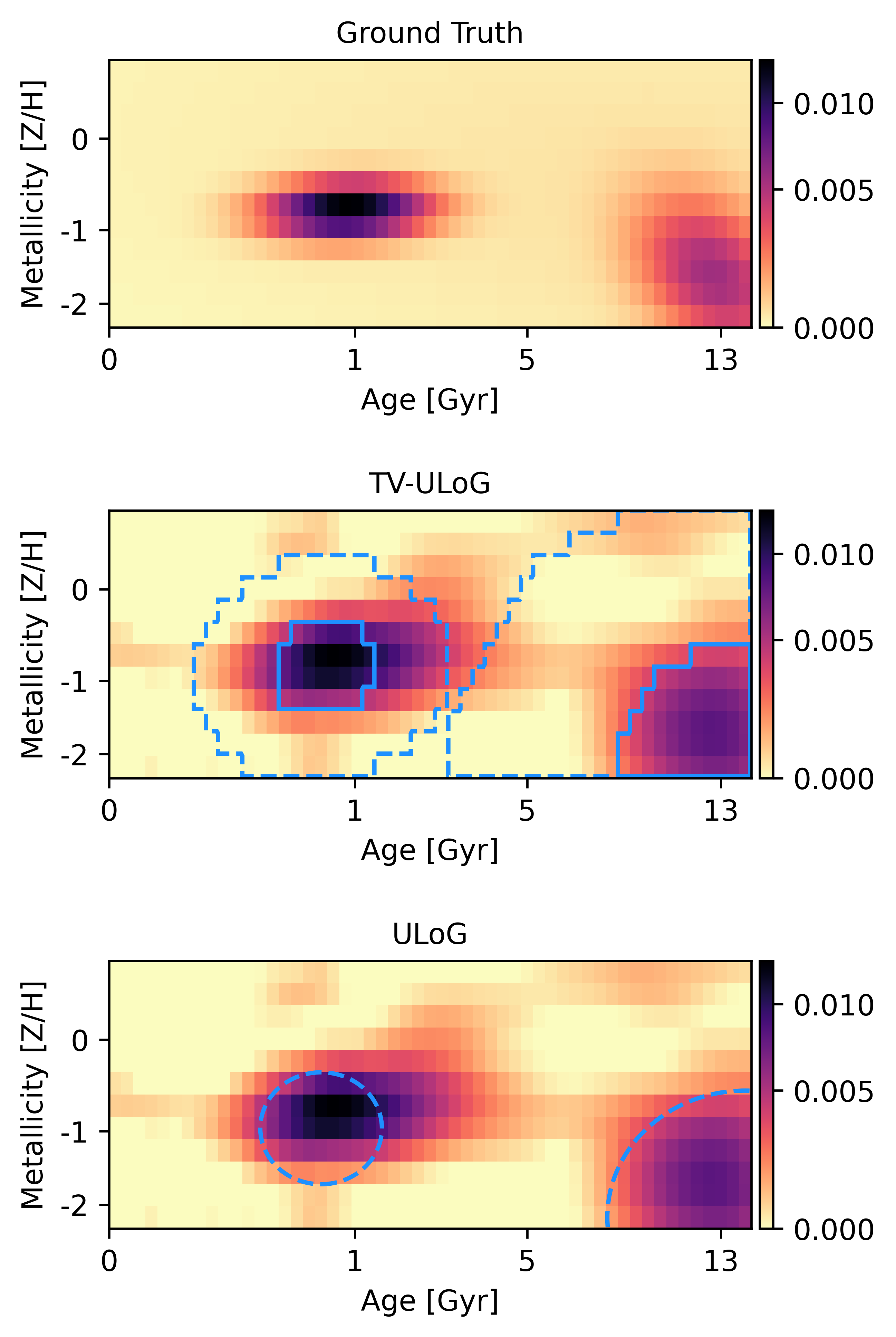}
\caption{Results of ULoG and TV-ULoG for integrated-light stellar population recovery. Top panel: Ground truth $\f^*$ from which the mock data was generated. Middle panel: MAP estimate $\f^\text{MAP}$ together with projection of blob sets. The two projections $\Pi_1$ and $\Pi_2$ are indicated by solid and dashed blue lines, respectively. Bottom panel: Results of ULoG superimposed on the MAP estimate.}\label{fig:stellar_recovery}
\end{figure}

We computed a minimizer of the associated optimization problem \eqref{eq:tv_ulog_discrete} using the interior-point method, since it was again the most efficient (see also \autoref{subsubsec:comparison_optimization_methods}). Two blob regions were extracted and visualized using the procedures described in \autoref{subsec:extraction} and \autoref{subsec:visualization}. In \autoref{fig:stellar_recovery} (middle panel), the projected blob regions are plotted together with the MAP estimate $\f^\text{MAP}$. 

\subsubsection{Comparison with ULoG}

The bottom panel of \autoref{fig:stellar_recovery} shows the result of the ULoG method (see \autoref{subsec:ulog}). Recall that in the ULoG method, we compute a minimizer $\Blanket$ of
\begin{equation*}
\begin{aligned}
\min_{\bm u \in \R^{N_1 \times N_2 \times K}} \quad & \norm{\Normlap \bm u}^2 \\
\text{s.t.} \quad &\Ulow \leq \bm u \leq \Uupp,
\end{aligned}
\end{equation*}
and then determine the local minimum points of $\normlap \Blanket$. A local minimum point $(i,j,k)$ of $\Blanket$ is then visualized by a dashed blue circle with center $(i,j)$ and radius $\sqrt{2 t_k}$ (see also \autoref{ex:prototypical_blob}). In contrast, the proposed TV-ULoG method yields a representative $\Blanket$ such that $\normlap \Blanket$ attains its local minima on connected regions (see also \autoref{subsec:tv_approach}), which are visualized in the middle panel of \autoref{fig:stellar_recovery} using the method described in \autoref{subsec:visualization}.

We see that both methods detect two blobs with confidence. However, the projected blob regions obtained from TV-ULoG allow for a better localization of the blob center. More importantly, the projected regions have a clear interpretation: The inner regions contain the center of the uncertain blob with 95\%-probability. The outer regions contain the corresponding blob circles. In contrast, the dashed circles provided by ULoG have a less clear interpretation since they mix scale and uncertainty information (see the discussion after \eqref{eq:old_cost_function}).

\subsection{Comparison of optimization methods}\label{subsubsec:comparison_optimization_methods}

Both for the one-dimensional deconvolution example and the stellar recovery example, we compared the performance of the optimization strategies proposed in \autoref{subsec:implementation_tv} for the solution of the optimization problem \eqref{eq:tv_ulog_discrete}, namely the dual-smoothing approach (\autoref{sssec:dual_smoothing}), the primal-smoothing approach (\autoref{sssec:primal_smoothing}) and the interior-point method (\autoref{sssec:socp}).

Both for the dual- and primal-smoothing approach we used the FGP method (see \autoref{alg:fgp}) to solve the smoothed optimization problem, while we used ECOS \cite{DomChuBoy13} for the interior-point approach.

The results of our comparison are plotted in \autoref{fig:optimization_deconvolution} and \autoref{fig:optimization_stellar_recovery}.

\begin{figure}[h]
\centering
\includegraphics[width=0.9\columnwidth]{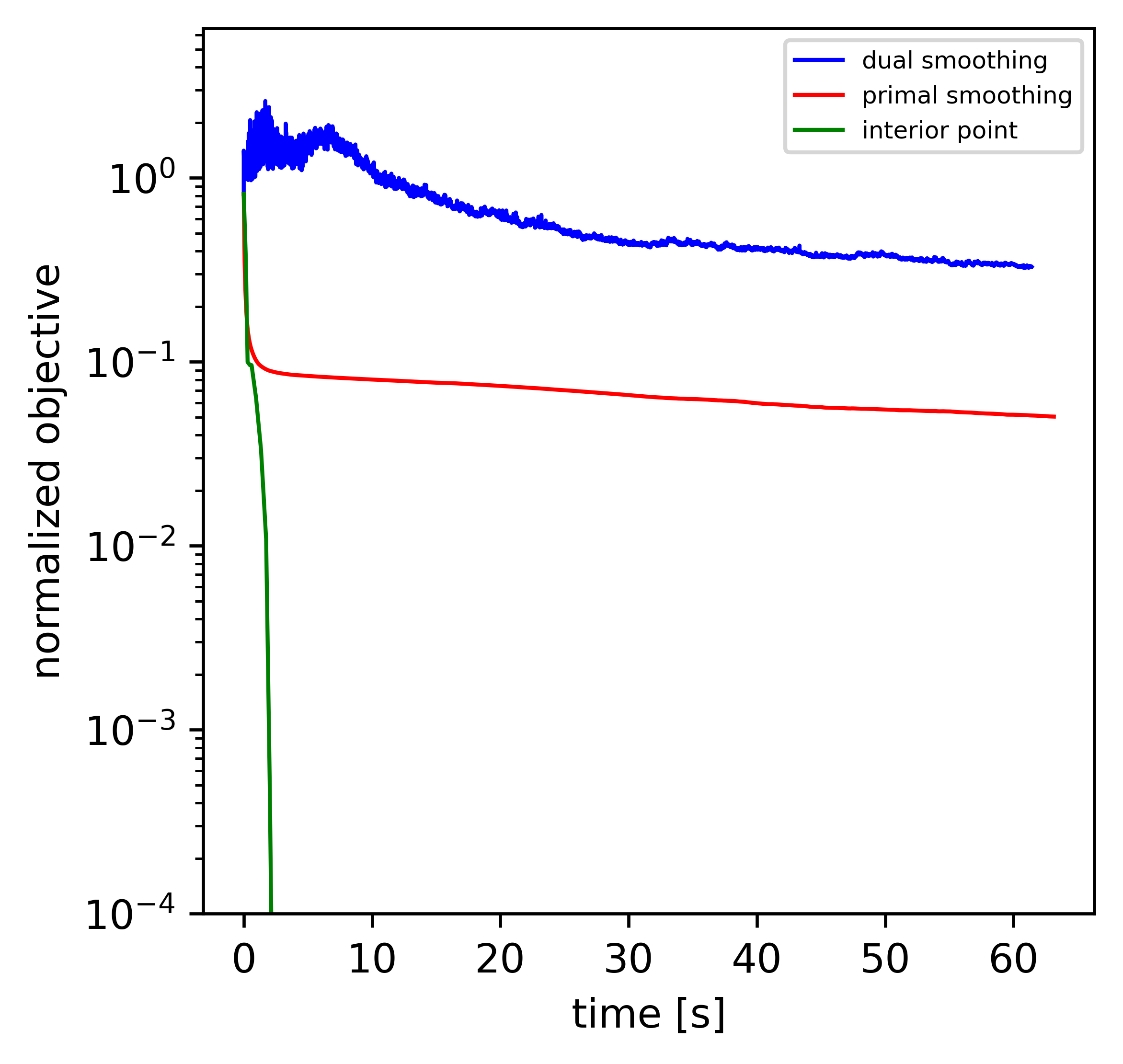}
\caption{Comparison of the computation time for the different optimization methods for the deconvolution problem (see \autoref{subsec:deconvolution}). The objective is normalized so that the minimum is at 0 and the initial value is at 1.}\label{fig:optimization_deconvolution}
\end{figure}

\begin{figure}[h]
\centering
\includegraphics[width=0.9\columnwidth]{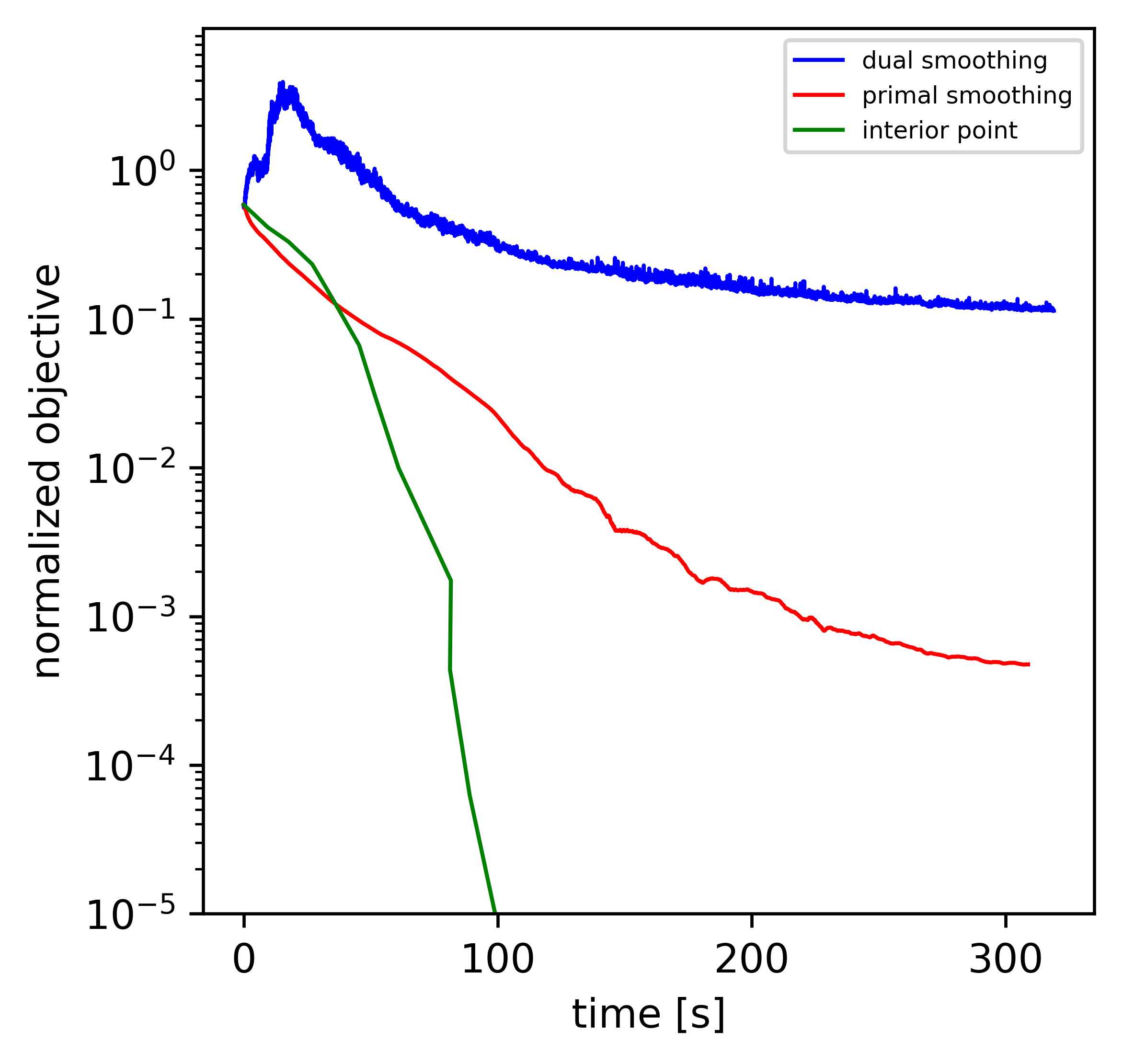}
\caption{Comparison of the computation time for the different optimization methods for the stellar recovery problem (see \autoref{subsec:stellar_recovery}). The objective is normalized so that the minimum is 0.}\label{fig:optimization_stellar_recovery}
\end{figure}

In both cases, the interior-point approach is able to achieve much higher accuracy. For the deconvolution problem, the first-order methods take considerably longer and are not able to achieve a high precision. For the stellar recovery problem, the primal smoothing method converges faster to a low-accuracy solution, but is not able to further improve.

Also, both the dual- and primal-smoothing approach have the additional disadvantage that they depend on the choice of a smoothing parameter $\mu$, where a larger value of $\mu$ corresponds to more smoothing but higher approximation error (see \autoref{rem:smoothing_parameter}). In \autoref{fig:smoothing_parameter}, we plotted the performance of the dual and primal smoothing approach on the stellar recovery problem for different choices of $\mu$. The trade-off between speed of convergence and achieveable accuracy is clearly visible. If the smoothing parameter is chosen too small, the first-order methods do not converge in a practical amount of time. This was in particular the case for the choice of smoothing parameter suggested by the bound \eqref{eq:nesterov_bound}. We tested many different choices, but did not find a configuration for which the performance of the first-order methods was comparable to the interior-point method. Furthermore, we also tested other solvers for the smoothed optimization problem, such as the L-BFGS-B method, but the performance was similar to the FGP method.

\begin{figure}[h]
\centering
\includegraphics[width=0.9\columnwidth]{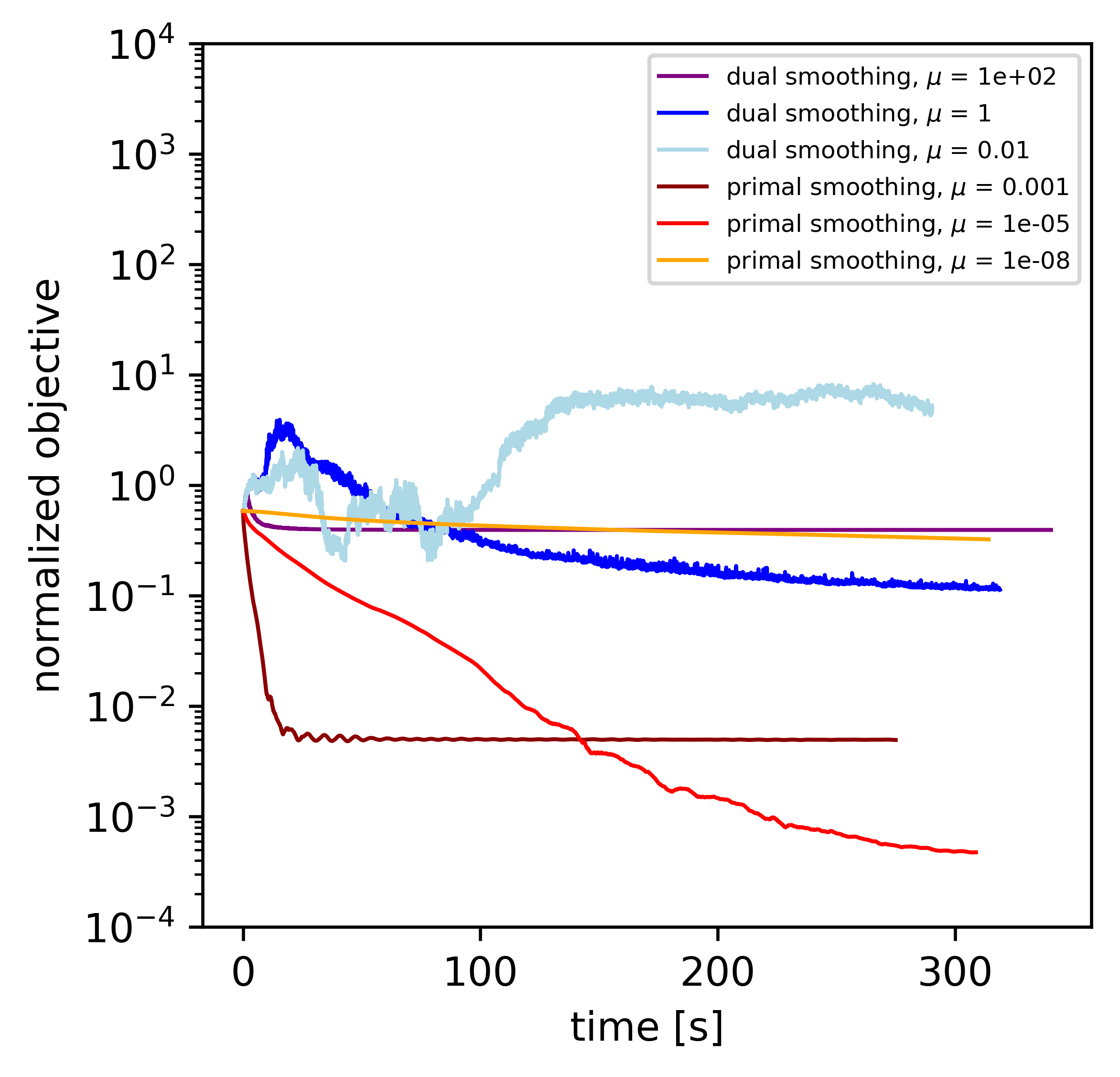}
\caption{The performance of the dual- and primal-smoothing approach for the stellar recovery problem, for various choices of smoothing parameter $\mu$. For each choice the plot shows the performance of 200 000 FGP iterations.}\label{fig:smoothing_parameter}
\end{figure}

Since it is necessary to achieve very high precision in the objective function of \eqref{eq:tv_ulog_discrete} to obtain the desired piecewise-constant solutions, the interior-point method should be the method of choice, since it has the additional advantage that it does not require hand-tuning of a smoothing parameter.

\section{Conclusion}\label{sec:conclusion}

In this work, we have developed a novel approach for blob detection in uncertain images that represents the uncertainty in the detected blobs by regions in scale space. These regions are obtained from the minimizer of a non-smooth optimization problem. Using similarities to CTV-denoising, we proposed three approaches for the numerical solution of the discretized problem. We also described how the scale space regions can be visualized on the image domain in an interpretable way.

The proposed method was illustrated on two numerical examples  -- one-dimensional deconvolution and integrated-light stellar population recovery -- where it yielded clear results that were consistent with the ground truth. We also evaluated the performance of the different optimization methods and observed that the interior-point method outperformed the other two approaches, assumedly because it is more robust against the ill-conditioning of the problem.

Our proposed method is flexible since it only requires access to a tube in which the scale-space representation of the uncertain image lies with high probability. Such a tube can be computed for many applications, for example in the important special case of Bayesian imaging.

Finally, the proposed methods are not specific to astronomical applications, although they were originally developed in that context. The methodology could be applied in any setting were blob detection in uncertain signals is relevant, for example in medical and geophysical imaging.

\backmatter

\bmhead{Acknowledgments}

FP and OS were supported by the Austrian Science Fund (FWF),
with SFB F68 ``Tomography Across the Scales'', project F6807-N36 (Tomography with Uncertainties). The financial support by the Austrian Federal Ministry for Digital and Economic
Affairs, the National Foundation for Research, Technology and Development and the Christian Doppler
Research Association is gratefully acknowledged. 
The authors are grateful to Prashin Jethwa and Glenn van de Ven for the productive cooperation within the SFB. The authors would also like to thank Noemi Naujoks for feedback on the manuscript.

\begin{appendices}

\section{Estimation of scale-space tubes from MCMC samples}\label{app:tubes_from_mcmc}

In this appendix, we describe how, given an observation $\Y = \y$, we can estimate a credible scale-space tube $[\Ulow, \Uupp]$ using MCMC samples $\f^{(1)}, \ldots, \f^{(S)}$ from the posterior distribution $\PP_{\F | \Y}(\cdot \given \y)$. Recall that MCMC is a class of methods that produce approximate samples from a probability distribution using its density function \cite{BroGelJonMen11}. Given MCMC samples, the standard way to estimate the posterior probability that $\F$ is contained in a given set $A \subset \R^{N_1 \times N_2}$ is \cite{MeyTwe09}
\begin{align}
\PP_{\F | \Y}(A \given \y) \approx \frac{1}{S} \sum_{s=1}^S \indicator_{A}(\f^{(s)}), \label{eq:mcmc_approx}
\end{align}
where the indicator function $\indicator_A$ is defined as
\begin{align*}
\indicator_A(\f) := \begin{cases}
1, & \text{if } \f \in A, \\
0, & \text{otherwise.}
\end{cases}
\end{align*}
For our purposes, we want to find the minimal-volume tube $[\Ulow, \Uupp]$ such that
\begin{align}
\PP_{\bm U | \Y}([\Ulow, \Uupp] \given \y) \geq 1 - \alpha. \label{eq:bayesian_credibility_condition}
\end{align}
Recall that for a set $A \subset \R^{N_1 \times N_2 \times K}$, $\PP_{\bm U | \Y}(A \given \y)$ is given by the pushforward
\begin{align}
\PP_{\U | \Y}(A \given \y) = \PP_{\F | \Y}(\bm \Phi^{-1}(A) \given \y), \label{eq:pushforward}
\end{align} 
where $\bm \Phi$ denotes the operator that maps an image $\f$ to its discrete scale-space representation (cf. \autoref{subsec:discrete_scale_space}). Inserting \eqref{eq:pushforward} in \eqref{eq:bayesian_credibility_condition} and using the approximation \eqref{eq:mcmc_approx}, we obtain
\begin{align*}
1 - \alpha & \leq \PP_{\F | \Y}(\bm \Phi^{-1}([\Ulow, \Uupp]) \given \y)\\
& \approx \frac{1}{S} \sum_{s=1}^S \indicator_{\bm \Phi^{-1}([\Ulow, \Uupp])}(\f^{(s)}) \\
& = \frac{1}{S} \abs{ \Set{s \in [S]}{\bm \Phi(\f^{(s)}) \in [\Ulow, \Uupp]} },
\end{align*}
where for a set $A$, $\abs{A}$ denotes its cardinality. This leads to a condition on $[\Ulow, \Uupp]$ in terms of samples:
\begin{align}
\abs{ \Set{s \in [S]}{\bm \Phi(\f^{(s)}) \in [\Ulow, \Uupp]} } \geq (1 - \alpha) \cdot S. \label{eq:approximate_credibility}
\end{align}
Given samples $\f^{(1)}, \ldots, \f^{(S)}$ it thus remains to find the smallest-volume tube $[\Ulow, \Uupp]$ that satisfies \eqref{eq:approximate_credibility}. However, as was already discussed in \cite[section 5.3]{ParJetBoeAlfSch23a}, solving this problem exactly is computationally infeasible even for moderately sized images. Instead, one has to use heuristic approaches that determine a small but not minimal tube that satisfies \eqref{eq:approximate_credibility}.

Our method is based on the idea of sorting the samples in descending order according to their probability, and then using bisection to find the smallest-volume tube that satisfies \eqref{eq:approximate_credibility} among the increasing sequence of tubes spanned by the ordered samples. In detail: Let 
$p_{\F | \Y}$ denote the density function of $\PP_{\F | \Y}$. Using a sorting algorithm, we find a reordering $(i_s)_{s=1}^S$ such that
\begin{align}
p_{\bm F | \Y}(\bm f^{(i_1)} \given \y) \geq \ldots \geq p_{\bm F | \Y}(\bm f^{(i_S)} \given \y). \label{eq:ordered_samples}
\end{align}
Then, we compute for each sample $\f^{(i)}$ its scale-space representation $\bm u^{(i)} = \bm \Phi(\f^{(i)})$.
For each $s \in [S]$, the discrete scale-space representations $\bm u^{(i_1)}, \ldots, \bm u^{(i_s)}$ span a tube $[\Ulows, \Uupps]$ given by
\begin{equation}
\begin{aligned}
\Ulows_{i,j,k} &:= \min \Set{u_{ijk}^{(i_r)}}{r \in [s]}, \\
\Uupps_{i,j,k} &:= \max \Set{u_{ijk}^{(i_r)}}{r \in [s]}.
\end{aligned}\label{eq:llowslupps}
\end{equation}
(The tube $[\Ulows, \Uupps]$ is simply the smallest-volume tube that contains $\bm u^{(i_1)}, \ldots, \bm u^{(i_s)}$). Since these tubes are monotonically increasing, the desired tube can be found very efficiently using bisection. The detailed pseudocode for this method is given in \autoref{alg:credible_tubes}.

Note that this algorithm is different from the method used in our previous work \cite[section 5.3]{ParJetBoeAlfSch23a}, which in particular did not use evaluations of the posterior density function. In our numerical experiments, the new method performed better, in the sense that it yielded tubes that contained the same number of samples but had lower volume.

\begin{algorithm}
\caption{Credible scale-space tubes from samples}\label{alg:credible_tubes}
\begin{algorithmic}[1]
\Require{Samples $\f^{(1)}, \ldots, \f^{(S)}$ from $p_{\F|\Y}(\cdot \given \y)$, a credibility parameter $\alpha \in (0, 1)$, and a maximum number of bisection steps $M \in \N$.}
\State Find a reordering $i_1,\ldots,i_S$ such that \eqref{eq:ordered_samples} holds;
\State Compute $\u^{(i)} = \bm \Phi \f^{(i)}$ for all $i \in [S]$;
\State $S_\alpha = \lceil (1 - \alpha) S \rceil$;
\State Set $T = [\bm u^{\mathrm{low}, S_\alpha}, \bm u^{\mathrm{upp}, S_\alpha}]$ (cf. \autoref{eq:llowslupps});
\State $K = \abs{ \Set{s \in [S]}{\bm u^{(i_s)} \in T} }$;
\If{$K = S_\alpha$}
\Return $T$
\Else
\State Set $K_\text{min} = 1$ and $K_\text{max} = S_\alpha$;
\For{$m = 1,\ldots, M$}
\If{$K > S_\alpha$}
\State $K_\text{max} = K$;
\State $K = \frac{1}{2}(K_\text{min} + K)$;
\ElsIf{$K < S_\alpha$}
\State $K_\text{min} = K$;
\State $K = \frac{1}{2}(K + K_\text{max})$;
\Else
\State break;
\EndIf
\State Set $T = [\u^{\mathrm{low}, K}, \u^{\mathrm{upp}, K}]$;
\State $K = \abs{\Set{s \in [S]}{\bm u^{(i_s)} \in T}}$;
\EndFor
\State return $T$;
\EndIf
\end{algorithmic}
\end{algorithm}

\end{appendices}

\bibliography{ParKirSch23_preprint}

\end{document}